\def\HAL{1}
\newtheorem{proposition}{Proposition}
\newtheorem{definition}{Definition}
\newtheorem{example}{Example}
\def\downparenfill{$\m@th\braceld\leaders\vrule\hfill\bracerd$}
\def\overparen#1{\mathop{\vbox{\ialign{##\crcr\crcr
\noalign{\kern0.4ex}
\downparenfill\crcr\noalign{\kern0.4ex\nointerlineskip}
$\hfil\displaystyle{#1}\hfil$\crcr}}}\limits}
\def\NN{{\mathbb N}}    
\def\RR{{\mathbb R}}    
\def\de{{\widetilde e}}     
\def\dx{{\widetilde x}}     
\def\dX{{\widetilde X}}     
\def\dE{{\widetilde E}}     
\def\dlambda{\widetilde \lambda}
\def\dk{{\widetilde k}}     
\def \dG{\tilde G}
\def\PR{{P}}
\def\der{{\mathfrak{d}}}
\def \factun{\textsf{Fact 1 }}
\def \factde{\textsf{Fact 2 }}
\def \facttr{\textsf{Fact 3 }}
\begin{document}

\if\HAL 1
\title{Lyapunov functions obtained from first order approximations}
\author{V. Andrieu\footnote{
Universit\'e Lyon 1 CNRS UMR 5007 LAGEP, France and Fachbereich C - Mathematik und Naturwissenschaften, Bergische Universit\"at Wuppertal, Germany. 
email{ vincent.andrieu@gmail.com}
}
}
\else
\title*{Lyapunov functions obtained from first order approximations}
\author{Vincent Andrieu}
\institute{Universit\'e Lyon 1 CNRS UMR 5007 LAGEP, France and Fachbereich C - Mathematik und Naturwissenschaften, Bergische Universit\"at Wuppertal, Germany. \email{ vincent.andrieu@gmail.com}}
\fi

\maketitle

\null\hfill \textit{In the honor of the 60{th} birthday of Laurent Praly}
\vspace{4em} 

\abstract{In this paper, we study the construction of Lyapunov functions based on first order approximations. 
In a first part, the study of local exponential stability property of a transverse invariant manifold is considered.
This part is mainly a rephrasing of the result of \cite{AndrieuJayawardhanaPraly_TAC_TransExpStab}.
It is shown with this framework how to construct a Lyapunov function which characterizes this local stability property.
In a second part, when considering the global stability property of an equilibrium point it is shown that the study of first order approximation along solutions of the system allows to construct a Lyapunov function.
}


\noindent{\textbf{Notation~:}}
\begin{itemize}
\item For a vector in $\RR^n$ and a matrix in $\RR^{n\times n}$ the notation $|\cdot|$ stands for the usual $2$ norm.
\item For a positive definite matrix $P$, $\mu_{\max}\{P\}$ and $\mu_{\min}\{P\}$ are respectively the largest and smallest eigenvalue.
\end{itemize}

\section{Introduction}

The use of Lyapunov functions in the study of the stability of solutions or invariant sets of dynamical systems has a long history.
It can be traced back to Lyapunov himself who has introduced this concept in its dissertation  in 1892 (see \cite{Lyapunov_92_IJC_general} for an english translation).
The primary objective of a Lyapunov function is to analyze the behavior of trajectories of a dynamical systems and how this behavior is preserved after perturbations.
However, this tool is also very efficient to synthesize control algorithms as for instance stabilizing control laws, regulators, asymptotic observers (see for instance \cite{Isidori_Book_89,SepulchreJankovic_Book_97,Khalil_Book_02,Praly_Poly_08}).

This is why the study of converse Lyapunov theorem have received a huge attention from the nonlinear control community.
One of the first major contribution to the problem of existence of a Lyapunov function can be attributed to Massera \cite{Massera_AnnalMath_49}.
This results have then subsequently improved over the years (see \cite{Massera_AnnalMath_56,Kurzweil_AMST_56}) and we can quote Teel and Praly who established a theorem of existence of a Lyapunov function in a very general framework in \cite{TeelPraly_ESAIM_00}.
However, despite the rise of a very complete theory to infer the existence of a Lyapunov function, its construction in practice appears to be a very difficult task.

On another hand, using a first order approximation to analyze the local stability of a nonlinear system is the most commonly used approach. 
Indeed, a first order analysis deals intrinsically with linear systems tools and it provides a simple way to construct local Lyapunov functions for a nonlinear system. 

In this note, the \textit{linearization approach} is extended in two directions. The first extension  is the case in which the stability studied concerns a simple manifold and not an equilibrium. This extension has already been published in \cite{AndrieuJayawardhanaPraly_CDC_13}  and \cite{AndrieuJayawardhanaPraly_TAC_TransExpStab} and in this note we briefly rephrase these results.
The second extension is to show that when dealing with equilibrium points, global property may be characterized from first order approximations \textit{along solutions}.
%

In order to introduce these results and aiming at allowing to get a full grip on the key points of the approach 
the following simpler framework is first considered.
Hence, some very classical results are rephrased in the following paragraph.


Consider a nonlinear dynamical system defined on $\RR^{n_e}$ with the origin as equilibrium~:
\begin{equation}\label{eq_SystClass}
\dot e = F(e)\ ,\ F(0)=0\ ,
\end{equation}
with state $e$ in $\RR^{n_e}$ and with a $C^1$ vector field $F:\RR^{n_e}\rightarrow\RR^{n_e}$.
Solutions initiated from $e$ in $\RR^{n_e}$ evaluated at time $t$ are denoted $E(e,t)$.

The origin of system (\ref{eq_SystClass}) is said to be Locally Exponentially Stable (LES for short) if there exist three positive real numbers $k$,  $\lambda$ and $r$ such that the following estimate holds~:
\begin{equation}\label{eq_LES}
|E(e,t)| \leq k \exp(-\lambda t) |e| \ ,\ \forall (e,t)\in \RR^{n_e}\times\RR_+\ ,\ |e|\leq r\ .
\end{equation}

As it is well known, the LES property of the system (\ref{eq_SystClass}) can be checked from the study of the first order approximation  around $"0"$.
Indeed, it is well known (see \cite[Theorem 4.15, p.165]{Khalil_Book_02}) that LES  of the origin of (\ref{eq_SystClass}) is equivalent with exponential stability of the origin of the  linear dynamical system  defined in $\RR^{n_e}$ as follows~:
\begin{equation}\label{eq_LinSystClass}
\dot \de = \frac{\partial F}{\partial e}(0) \, \de\ .
\end{equation}


Constructing a Lyapunov function for the linear system (\ref{eq_LinSystClass}) is an easy task.
Indeed, if the matrix $\frac{\partial F}{\partial e}(0)$ is Hurwitz, and given a positive definite matrix $Q$ in $\RR^{n_e\times n_e}$
the matrix $P$ in $\RR^{n_e\times n_e}$ defined as~:
\begin{equation}\label{eq_PClass}
P = \int_0^{+\infty} \exp\left (\frac{\partial F}{\partial e}(0)s\right )^\top Q \exp\left (\frac{\partial F}{\partial e}(0)s\right ) ds
\end{equation}
is well defined, positive definite and satisfies the Lyapunov algebraic equality~:
\begin{equation}\label{eq_LyapClass}
\frac{\partial F}{\partial e}(0)^\top P + P\frac{\partial F}{\partial e}(0) = - Q\ .
\end{equation}
The former equation implies that the mapping $\de\mapsto\de^\top P \de$ is a Lyapunov function for the  system (\ref{eq_LinSystClass}) since it yields along its trajectories $\dot {\overparen{\de^\top P \de}} =-\de^\top Q \de$.

Moreover, the quadratic function $V(e) = e^\top P e$ is a Lyapunov function for the nonlinear system (\ref{eq_SystClass}) since along its trajectories the following equality holds~:
$$
\dot {\overparen{e^\top P e}} =  2e^\top PF(e) = e^\top \Big [-Q  + 2\int_0^1 P
\underbrace{\left [\frac{\partial F}{\partial e}(se) - \frac{\partial F}{\partial e}(0)\right ]}_{\text{small if $|e|$ small}}ds \Big ]e\ .
$$
This implies that  there exists $r>0$ and $\lambda>0$ such that for all $e$ such that $|e|\leq r $, $\dot {\overparen{e^\top P e}} <-\lambda e^\top P e$. 
This characterizes local exponential stability of the origin of (\ref{eq_SystClass}).


In conclusion to this rephrasing of the simplest framework, the following assertions have been obtained.
\begin{list}{}{%
\parskip 0pt plus 0pt minus 0pt%
\topsep 1ex plus 0pt minus 0pt%
\parsep 0.5ex plus 0pt minus 0pt%
\partopsep 0pt plus 0pt minus 0pt%
\itemsep 1ex plus 0pt minus 0pt
\settowidth{\labelwidth}{1em}%
\setlength{\labelsep}{0.5em}%
\setlength{\leftmargin}{\labelwidth}%
\addtolength{\leftmargin}{\labelsep}%
}
\item[\textbf{\textsf{Fact 1~:}}]
 The exponential stability property for the nonlinear system implies an exponential stability property for the linearized system.

\item[\textbf{\textsf{Fact 2~:}}] The exponential stability property for the linearized system can be characterized by a quadratic Lyapunov function.

\item[\textbf{\textsf{Fact 3~:}}] The Lyapunov function associated to the linearized system may be used directly on the nonlinear system to characterize its stability property.
\end{list}

In the first part of this paper, based on the result of \cite{AndrieuJayawardhanaPraly_TAC_TransExpStab}, we will show that this is also the case when considering  exponential stability of a simple invariant manifold. This allows to introduce Lyapunov function that characterizes the local exponential stability property of an invariant manifold. 

The second part of the paper is devoted to global properties. It will be shown that these three facts are also true when considering the global attractivity of an equilibrium.
Finally, in the conclusion, we introduce some difficulties we are facing when considering the case of the global stability property of an invariant manifold.
This gives a gimps of the results obtained in \cite{AndrieuJayawardhanaPraly_GlobTransExpStab}.

\section{Local transverse exponential stability of a manifold}


\subsection{Transverse local uniform exponential stability }
Throughout this section, instead of considering the system (\ref{eq_SystClass}), a system in the following form is considered.
\begin{equation}
\label{eq_SystemTrans}
\dot e = F(e,x)\ ,\ \dot x = G(e,x)\ ,\ F(0,x)=0\ ,
\end{equation}
where $e$ is in $\RR^{n_e}$, $x$ is in $\RR^{n_x}$
and the functions
$F:\RR^{n_e}\times\RR^{n_x}\rightarrow \RR^{n_e}$ and
$G:\RR^{n_e}\times\RR^{n_x}\rightarrow \RR^{n_x}$ are
$C^2$. We denote by $(E(e,x,t),X(
e,x,t))$ the (unique)
solution which goes through $(e,x)$ in $\RR^{n_e}\times\RR^{n_x}$
at time $t=0$. 
It is assumed that these solutions are defined for all positive times, i.e. the
system is {\it forward complete}.

For this system, the manifold $\mathcal E = \{(e,x), e=0\}$ is an invariant manifold.
The purpose of this section is to show that the properties obtained to characterize the exponential stability property of an equilibrium given in the introduction (i.e. the facts 1, 2 and 3) are still valid when considering the stability property of this manifold.

The local exponential stability of an equilibrium becomes the local exponential stability of the transverse manifold.
This one is defined as follows.
\begin{definition}[Transversal uniform local exponential stability (TULES-NL)]
The system (\ref{eq_SystemTrans}) is forward complete and there exist strictly positive real numbers $r$, $k$ and
$\lambda$ such
that we have, for all $(e_0,x_0,t)$ in $\RR^{n_e}\times\RR^{n_x}\times
\RR_{\geq 0}$ with $|e|\leq r$,
\begin{equation}
\label{eq_ExpStab}
|E(e_0,x_0,t)| \leq k |e_0| \exp(-\lambda t)
\  .
\end{equation}
\end{definition}

In other words, the system (\ref{eq_SystemTrans}) is said to be TULES-NL if the manifold $\mathcal{E}:=\{(e,x):\,  e=0\}$ is
exponentially stable for the system
(\ref{eq_SystemTrans}), locally in $e$
and uniformly in $x$.


\subsection{Fact 1 : Exponential stability of a linearized system }

As mentioned in the introduction, a linearized system "around" the invariant manifold has first to be considered.
In this case, the system is defined as~:
\begin{equation}
\label{eq_System_dif}
\dot \de = \frac{\partial F}{\partial e}(x)\de\ ,\ \dot x = \dG(x)\ ,
\end{equation}
where $\dG(x) = G(0,x)$.

If one wish to show that \textsf{Fact 1} also holds in this context we need to establish that the manifold $\tilde{\mathcal{E}}:=\{(x,\de):\,  \de=0\}$ of the linearized system transversal to $\mathcal E$ in (\ref{eq_System_dif}) is exponentially stable.
This is indeed the case has shown by the following proposition which has been proved in \cite{AndrieuJayawardhanaPraly_TAC_TransExpStab}.
\begin{proposition}[ \cite{AndrieuJayawardhanaPraly_TAC_TransExpStab} \textsf{FACT 1} holds]
\label{Prop_DetecNec}
 If   
Property \textsf{TULES-NL} holds and there exist positive
real numbers $\rho $,
$\mu$
  and $c$ such that, for all $x$ in
$\RR^{n_x}$,
\begin{equation}
\label{eq_BoundLocTrans1}
 \left|\frac{\partial F}{\partial e}(0,x)\right|\leq \mu
\ ,\  
\left|\frac{\partial G}{\partial x}(0,x)\right|\leq \rho
\end{equation}
and, for all $(e,x)$ in $B_e(kr)\times\RR^{n_x}$,
\begin{equation}
\label{eq_BoundLocTrans2}
\left|\frac{\partial^2 F}{\partial e\partial e}(e,x)\right|\leq c\ ,\
\left|\frac{\partial^2 F}{\partial x\partial e}(e,x)\right|\leq c\  ,\
\left|\frac{\partial G}{\partial e}(e,x)\right|\leq c
\; ,
\end{equation}
then the system (\ref{eq_System_dif})
is forward complete and there exist strictly positive real numbers $\dk $  and $\tilde \lambda$
such that any solution $(\dE(\de_0,x_0,t),X(x_0,t))$  of the transversally linear system (\ref{eq_System_dif})
satisfies, for all $(\de_0,x_0,t)$ in $\RR^{n_e}\times\RR^{n_x}\times
\RR_{\geq 0}$,
\begin{equation}\label{eq_ExpStabDriftLocalTrans}
|\dE(\de_0,x,t)|\leq  \dk \exp(-\tilde \lambda t)|\de_0|
\ .
\end{equation}
\end{proposition}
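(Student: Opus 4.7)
The plan is to identify (\ref{eq_System_dif}) with the variational equation of (\ref{eq_SystemTrans}) along the manifold $\mathcal{E}$, so that its solution is precisely a partial derivative of the nonlinear flow taken transverse to $\mathcal{E}$. Since $F(0,x) = 0$, the trajectory through $(0,x_0)$ stays on $\mathcal{E}$: $E(0,x_0,t) \equiv 0$ and $X(0,x_0,t)$ satisfies $\dot x = G(0,x) = \dG(x)$. Because $F$ is $C^2$ and because the identity $F(0,x)\equiv 0$ forces $\frac{\partial F}{\partial x}(0,x) = 0$, the matrix $M(t) := \frac{\partial E}{\partial e_0}(0,x_0,t)$ is, for each $x_0$, the fundamental matrix of
\[
\dot M \;=\; \frac{\partial F}{\partial e}(0, X(x_0,t))\, M, \qquad M(0) = I,
\]
and hence $\dE(\de_0, x_0, t) = M(t)\, \de_0$.

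Forward completeness of (\ref{eq_System_dif}) is then immediate from (\ref{eq_BoundLocTrans1}): $\dG$ has linear growth via the bound on $\partial G/\partial x(0,\cdot)$, so $X(x_0,\cdot)$ is global, and $|\dot{\de}| \leq \mu\,|\de|$ along this trajectory, so $\dE(\de_0,x_0,\cdot)$ is global.

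The exponential estimate follows from a single scaling argument. Fix $\de_0 \neq 0$, $x_0 \in \RR^{n_x}$ and $t \geq 0$. For every $s \in (0, r/|\de_0|]$ the initial condition $s\de_0$ lies in the TULES-NL ball, so
\[
\left|\frac{E(s\de_0, x_0, t)}{s}\right| \;\leq\; k\, |\de_0|\, \exp(-\lambda t).
\]
Differentiability of the nonlinear flow in $e_0$ at $0$ makes the left-hand side converge to $|M(t)\de_0| = |\dE(\de_0,x_0,t)|$ as $s \to 0^+$, yielding (\ref{eq_ExpStabDriftLocalTrans}) with $\dk = k$ and $\tilde \lambda = \lambda$.

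The main obstacle is the first step: rigorously establishing that $\partial E/\partial e_0$ is well defined and coincides with the fundamental matrix on the whole half-line $[0,+\infty)$, uniformly in $x_0 \in \RR^{n_x}$. This is the role played by the second-order bounds (\ref{eq_BoundLocTrans2}). Thanks to TULES-NL, every trajectory starting at $|e_0|\leq r$ remains trapped in $B_e(kr) \times \RR^{n_x}$; on that set the uniform bounds on $\partial^2 F/\partial e \partial e$, $\partial^2 F/\partial x \partial e$ and $\partial G/\partial e$ provide the control on the remainder of the Taylor expansion $E(s\de_0, x_0, t) = s\,\dE(\de_0,x_0,t) + o(s)$ needed to pass to the limit uniformly in $x_0$. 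Once this identification is pinned down on all of $[0,+\infty)\times\RR^{n_x}$, the scaling limit above closes the proof.
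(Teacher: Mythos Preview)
Your argument is correct, and it is genuinely different from the route taken in \cite{AndrieuJayawardhanaPraly_TAC_TransExpStab}. There, the solution $\dE(\de_0,x_0,\cdot)$ of the transversally linear system is compared not with a single rescaled nonlinear trajectory but with a concatenation of pieces $E(\de_i,x_i,t-t_i)$ of nonlinear trajectories, re-initialized at a sequence of times $t_i$ along the linearized solution; the bounds (\ref{eq_BoundLocTrans1})--(\ref{eq_BoundLocTrans2}) are then used quantitatively to control, over each time slice, the discrepancy between the linearized evolution and the nearby nonlinear one, so that the exponential decay of $E$ is inherited by $\dE$. Your approach is more direct: you recognize $\dE(\de_0,x_0,t)=\frac{\partial E}{\partial e_0}(0,x_0,t)\de_0$ via the standard variational equation (using $\frac{\partial F}{\partial x}(0,x)=0$), and then pass to the limit in the one-parameter family $E(s\de_0,x_0,t)/s$. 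This yields the sharp constants $\dk=k$, $\tilde\lambda=\lambda$, and in fact does \emph{not} require the quantitative bounds (\ref{eq_BoundLocTrans1})--(\ref{eq_BoundLocTrans2}) at all: forward completeness of (\ref{eq_System_dif}) already follows from forward completeness of (\ref{eq_SystemTrans}) (the $x$-dynamics on $\mathcal{E}$ is global, and the $\de$-equation is linear with continuous coefficient), and differentiability of the flow in the initial condition is automatic on the whole of $[0,+\infty)$ once the vector field is $C^1$ and the system is forward complete. Your last paragraph is therefore overcautious: no uniform-in-$x_0$ control of the Taylor remainder is needed, since the scaling limit is taken pointwise in $(x_0,t)$ and the TULES-NL bound is already uniform. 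The piecing argument of the paper, in contrast, is more robust in that it does not rely on differentiability of the flow but only on finite-increment comparisons, which may be advantageous in less regular settings.
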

\vspace{0.2cm}

The proof of this proposition   given   in \cite{AndrieuJayawardhanaPraly_TAC_TransExpStab}
 is
 based on the comparison between a given $e$-component of a solution
$\dE(\de_0,x_0,t)$ of (\ref{eq_System_dif}) with pieces of 
$e$-component of solutions    $E(\de_i,x_i,t-t_i)$ of solutions of
(\ref{eq_SystemTrans}) where $\de_i,x_i$ are sequences of points defined on $\dE(\de_0,x_0,t)$.  
Thanks to  the bounds (\ref{eq_BoundLocTrans1}) and (\ref{eq_BoundLocTrans2}),
  it is possible to show that $\dE$ and $E$ remain sufficiently closed so that $\dE$ inherit the convergence property of the solution $E$.
As a consequence, in
  the particular case in which $F$ does not depend on $x$, the two functions $E$ and $\dE$ 
do not
 depend on $x$
either
  and the bounds on the derivatives of the $G$ function 
are useless.

In \cite{AndrieuJayawardhanaPraly_TAC_TransExpStab}, the exponential stability of the manifold $\tilde{\mathcal{E}}:=\{(x,\de):\,  \de=0\}$ of the linearized system transversal to $\mathcal E$ in (\ref{eq_System_dif}) is named property UES-TL.

\subsection{Lyapunov matrix inequality}

The $\de$ components of the system (\ref{eq_System_dif}) is a parametrized time varying linear system.
Hence, the solutions $\dE(e,x,t)$, can be written as~:
$$
\dE(\de,x,t)= \Phi(x,t)\de\ ,
$$
where $\Phi$ is the transition matrix defined as a solution to the following $\RR^{n_e\times n_e}$ dynamical system~:
$$
\dot{\overparen{\Phi(x,t)}} = \frac{\partial F}{\partial e}(0,\dX(\dx,t))\Phi(\dx,t)\ ,\ \Phi(\dx,0)=I\ .
$$
An important point that has to be noticed is that due to equation (\ref{eq_ExpStabDriftLocalTrans}), each element of the (matrix) time function $t\mapsto \Phi(x,t)$ is in $L^2([0,+\infty))$.
Consequently, for all positive definite matrix $Q$ in $\RR^{n_e}$, the matrix function
\begin{equation}\label{eq_PTrans}
P(x) = \lim_{T\rightarrow +\infty } \int_0^T \Phi(x,s)^\top Q \Phi(x,s) ds
\end{equation}
is well defined.

By computing the Lie derivative of the matrix $P$ given in (\ref{eq_PTrans}), it is possible to show that this one satisfies a particular partial differential equation which shows that this function may be used to construct a quadratic Lyapunov function of the linearized system.
 \begin{proposition}[ \cite{AndrieuJayawardhanaPraly_TAC_TransExpStab}   \textsf{FACT 2} holds]
\label{Prop_ExistTensor}
Assume   
Property \textsf{UES-TL} holds, i.e. there exist
 $\dk $  and $\tilde \lambda$
such that any solution $(\dE(\de_0,x_0,t),X(x_0,t))$  of the transversally linear system (\ref{eq_System_dif})
satisfies, \ref{eq_ExpStabDriftLocalTrans}.
Assume moreover, that there exists a positive real number $\mu$ such that
\begin{equation}
\label{LP5}
\left|\frac{\partial F}{\partial e}(0,x)\right|\leq \mu
\qquad \forall x\in \RR^{n_x}\ ,
\end{equation} 
then for all positive definite matrix $Q$, there exists a continuous
function $\PR :\RR^{n_x}\rightarrow\RR^{n_e\times n_e}$
and strictly positive real numbers $\underline{p}$ and $\overline{p}$
such that $\PR $ has a derivative $\der_{\dG}\PR$ along $\dG$ in the
following sense
\begin{equation}
\label{LP9}
\der_{\dG} \PR(\dx)\; := \;
 \lim_{h\to 0}
\frac{\PR(\dX(\dx,h))-\PR(\dx)}{h}\ ,
\end{equation}
and we have, for all $\dx$ in $\RR^{n_x}$,
\begin{eqnarray}
\label{eq_TensorDerivativeTrans}
&\displaystyle
\hskip -3em
\der_{\dG} \PR(\dx) +
\PR (\dx ) \frac{\partial F}{\partial e}(0,\dx)
+ \frac{\partial F}{\partial e}(0,\dx)^\prime \PR(\dx)
\leq -Q\ ,
\\[0.5em]
\label{LP7}
&\displaystyle
\underline{p} \,  I \leq  \PR (\dx)\leq \overline{p}\,  I
\  .
\end{eqnarray}
\end{proposition}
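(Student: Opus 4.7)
The plan is to verify that the matrix $\PR(\dx)$ defined by the integral in (\ref{eq_PTrans}) satisfies all the claimed properties, exploiting the transition-matrix representation $\dE(\de,\dx,t) = \Phi(\dx,t)\de$ together with the \textsf{UES-TL} bound $|\Phi(\dx,s)|\leq \dk\exp(-\tilde\lambda s)$, which holds uniformly in $\dx$. This uniform exponential decay dominates the integrand by $\mu_{\max}\{Q\}\dk^2\exp(-2\tilde\lambda s)\,I$, so the limit (\ref{eq_PTrans}) converges and immediately yields the upper bound $\PR(\dx)\leq \frac{\dk^2\mu_{\max}\{Q\}}{2\tilde\lambda}I$. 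For the lower bound I would combine (\ref{LP5}) with a Gronwall estimate on $\dot\Phi = \frac{\partial F}{\partial e}(0,\dX(\dx,t))\Phi$ with $\Phi(\dx,0)=I$ to obtain $|\Phi(\dx,s) - I|\leq \exp(\mu s) - 1$ uniformly in $\dx$; for a sufficiently small $\tau > 0$ this forces $\Phi(\dx,s)^\top Q\,\Phi(\dx,s) \geq \frac{1}{2}Q$ on $[0,\tau]$, whence $\PR(\dx)\geq \underline{p}\,I$ for some $\underline{p}>0$ independent of $\dx$. Continuity of $\PR$ then follows from continuous dependence of the flow of (\ref{eq_System_dif}) on $\dx$ combined with dominated convergence using the same majorant.

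To derive (\ref{eq_TensorDerivativeTrans}), the key ingredient is the cocycle identity $\Phi(\dx,t+h) = \Phi(\dX(\dx,h),t)\,\Phi(\dx,h)$, which follows from uniqueness of solutions to the $\de$-component of (\ref{eq_System_dif}). Substituting this into $\PR(\dX(\dx,h))$ and shifting the integration variable produces the exact finite-difference identity
\begin{equation*}
\Phi(\dx,h)^\top\, \PR(\dX(\dx,h))\, \Phi(\dx,h) \;=\; \PR(\dx) - \int_0^h \Phi(\dx,s)^\top Q\, \Phi(\dx,s)\, ds\ .
\end{equation*}
Setting $A = \frac{\partial F}{\partial e}(0,\dx)$ and expanding $\Phi(\dx,h)^{-1} = I - hA + o(h)$ together with $\int_0^h \Phi^\top Q\, \Phi\, ds = hQ + o(h)$, inverting the identity above and dividing by $h$ shows that the limit (\ref{LP9}) exists and equals $-Q - \PR(\dx)A - A^\top \PR(\dx)$, which is the equality version of (\ref{eq_TensorDerivativeTrans}).

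The main difficulty I anticipate is the rigorous handling of this differentiation step. Differentiating under the integral sign in (\ref{eq_PTrans}) directly would require smoothness of $\Phi(\dx,s)$ with respect to $\dx$, which is not automatic from the $C^2$ assumptions on $F$ and $G$. The cocycle identity sidesteps this entirely by rewriting the finite difference $\PR(\dX(\dx,h)) - \PR(\dx)$ purely in terms of $\Phi(\dx,\cdot)$ at fixed $\dx$ together with the value $\PR(\dX(\dx,h))$, whose behavior in $h$ is elementary once the boundedness and continuity of $\PR$ established in the first step are available.
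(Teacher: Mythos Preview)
Your proposal is correct and follows essentially the same route as the paper: define $\PR$ by the integral (\ref{eq_PTrans}), obtain the upper bound directly from the uniform decay $|\Phi(\dx,s)|\leq \dk e^{-\tilde\lambda s}$, and derive (\ref{eq_TensorDerivativeTrans}) via the cocycle identity for $\Phi$ together with the expansion $\Phi(\dx,h)=I+hA+o(h)$, exactly as in the paper's detailed proof of the global analogue (Proposition~\ref{Prop_ConsP}). The only noteworthy difference is the lower bound: you use a short-time estimate $|\Phi(\dx,s)-I|\leq e^{\mu s}-1$ on $[0,\tau]$ and discard the tail, whereas the paper bounds $|v^\top\Phi(\dx,s)^{-1}|\leq e^{\mu s}|v|$ via (\ref{LP5}) and integrates the resulting pointwise estimate $v^\top\Phi^\top Q\Phi v\geq \mu_{\min}\{Q\}e^{-2\mu s}|v|^2$ over all of $[0,\infty)$ to get $\underline p=\mu_{\min}\{Q\}/(2\mu)$; both arguments are valid and yours is arguably more elementary, while the paper's yields an explicit constant.
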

\vspace{0.2cm}

When looking at the time derivative of the function $(\de,x)\mapsto \de^\top P(x) \de$ along the solution of the system (\ref{eq_System_dif}), it yields~:
$$
\dot {\overparen{\de^\top P(x) \de }} = -\de^\top Q \de\ .
$$
Hence, $(\de,x)\mapsto \de^\top P(x) \de$ is a Lyapunov function associated to the $\de$ component of the linearized system (\ref{eq_System_dif}). In other words, 
\factde
 introduced in the introduction is still valid when considering transverse exponential stability property.

The assumption (\ref{LP5}) 
is used to show that $P$ satisfies the left inequality
 in (\ref{LP7}).
Nevertheless this inequality holds without (\ref{LP5}) provided the 
function
 $s\mapsto \left|\frac{\partial \dE}{\partial \de}(0,\dx,s)\right|$ 
does
not
  go too fast to zero.


\subsection{Construction of a Lyapunov function}
From the matrix function $P$ obtained previously, it is possible to define a Lyapunov function which allows to characterize the property of local exponential stability of $\mathcal E$.

\begin{proposition}[ \cite{AndrieuJayawardhanaPraly_TAC_TransExpStab} \textsf{FACT 3} holds]
\label{Prop_Lyap}
If Property \textsf{ULMTE} holds and there exist positive
real numbers $\eta  $ and $c$ such that,
for all $(e,x)$ in $B_e(\eta )\times\RR^{n_x}$,
\begin{eqnarray}
\label{LP8}
&\displaystyle \left| \frac{\partial P}{\partial x} (x)\right|\leq c
\; ,
\\[0.3em]
\label{LP6}
&\hskip -1.6 em
\displaystyle \left|\frac{\partial^2 F}{\partial e\partial
e}(e,x)\right|\leq c\; ,\
\left|\frac{\partial^2 F}{\partial x\partial e}(e,x)\right|\leq c\;  ,\
\left|\frac{\partial G}{\partial e}(e,x)\right|\leq c
\, ,\null
\end{eqnarray}
then Property \textsf{TULES-NL} holds.
\end{proposition}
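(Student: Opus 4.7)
The plan is to use $V(e,x) := e^\top P(x) e$, with $P$ furnished by Property \textsf{ULMTE}, as a Lyapunov function for the nonlinear system (\ref{eq_SystemTrans}). The sandwich (\ref{LP7}) gives $\underline p |e|^2 \leq V(e,x) \leq \overline p |e|^2$, so any linear decay $\dot V \leq -\alpha V$ uniform in $x$ immediately yields (\ref{eq_ExpStab}) with $k = \sqrt{\overline p/\underline p}$ and $\lambda = \alpha/2$, which is exactly \textsf{TULES-NL}.

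Using the regularity of $P$ ensured by (\ref{LP8}), the derivative of $V$ along (\ref{eq_SystemTrans}) reads
\begin{equation*}
\dot V \; = \; e^\top \frac{\partial P}{\partial x}(x)\, G(e,x)\, e \;+\; 2 e^\top P(x) F(e,x).
\end{equation*}
I would Taylor-expand each vector field at $e=0$: using $F(0,x)=0$, set
\begin{align*}
R_F(e,x) &:= \int_0^1 \Big[\tfrac{\partial F}{\partial e}(se,x) - \tfrac{\partial F}{\partial e}(0,x)\Big]e\, ds, \\
R_G(e,x) &:= \int_0^1 \tfrac{\partial G}{\partial e}(se,x)\, e\, ds,
\end{align*}
so that $F(e,x) = \tfrac{\partial F}{\partial e}(0,x)\, e + R_F(e,x)$ and $G(e,x) = \tilde G(x) + R_G(e,x)$. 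The second-derivative bounds in (\ref{LP6}) give $|R_F(e,x)| \leq \tfrac{c}{2}|e|^2$ and $|R_G(e,x)| \leq c|e|$ for every $(e,x) \in B_e(\eta) \times \RR^{n_x}$.

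Substituting and regrouping, the terms that are quadratic in $e$ reconstitute exactly the left-hand side of (\ref{eq_TensorDerivativeTrans}), provided one identifies $e^\top \tfrac{\partial P}{\partial x}(x)\tilde G(x) e$ with $e^\top \der_{\tilde G} P(x) e$; then (\ref{eq_TensorDerivativeTrans}) bounds that contribution by $-e^\top Q e$. The remaining terms, $2 e^\top P(x) R_F(e,x)$ and $e^\top \tfrac{\partial P}{\partial x}(x) R_G(e,x)\, e$, are each $O(|e|^3)$ uniformly in $x$, controlled by a constant $C$ depending only on $\overline p$ and $c$. Picking $r \leq \min\{\eta, \mu_{\min}\{Q\}/(2C)\}$ then yields, for all $|e| \leq r$ and all $x$,
\begin{equation*}
\dot V \; \leq \; -\tfrac{1}{2}\, e^\top Q e \; \leq \; -\frac{\mu_{\min}\{Q\}}{2\overline p}\, V,
\end{equation*}
which is the desired exponential decay uniform in $x$.

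The one delicate step is the identification $e^\top \der_{\tilde G} P(x) e = e^\top \tfrac{\partial P}{\partial x}(x)\tilde G(x) e$, bridging the Dini-type derivative (\ref{LP9}) coming from Property \textsf{ULMTE} with the genuine partial derivative provided by (\ref{LP8}). Because (\ref{LP8}) makes $P$ locally Lipschitz and $\tilde X(x,h) = x + h\tilde G(x) + o(h)$ along the $\tilde G$-flow, a direct passage to the limit in (\ref{LP9}) yields the identification; all other estimates are routine linearization arguments, entirely parallel to the classical equilibrium case recalled in the introduction.
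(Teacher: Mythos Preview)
Your proposal is correct and follows exactly the approach indicated in the paper: the paper simply states that the result ``is a direct consequence of the use of $V(e,x)=e^\prime P(x)e$ as a Lyapunov function'' and that the bounds (\ref{LP8}) and (\ref{LP6}) together with (\ref{eq_TensorDerivativeTrans}) make its time derivative negative in a uniform tubular neighborhood of $\{e=0\}$. Your write-up is a faithful and careful expansion of that sketch, including the identification of $\der_{\tilde G}P$ with the classical directional derivative under the regularity hypothesis (\ref{LP8}).
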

\vspace{0.2cm}

This is a direct consequence of the use of $V(e,x) = e^\prime P(x)e$ as a Lyapunov function.
The bounds (\ref{LP8}) and (\ref{LP6}) are 
used
 to show that, with equation (\ref{eq_TensorDerivativeTrans}),  the time derivative of this Lyapunov function is negative in a (uniform) tubular neighborhood of the manifold $\{(e,x),e=0\}$.


In conclusion, from Proposition \ref{Prop_DetecNec}, \ref{Prop_ExistTensor} and \ref{Prop_Lyap} it yields that \factun , \factde \,
and \facttr \, 
obtained  in the analysis of local exponential stability of an equilibrium are still valid in the context of local exponential stability of a transverse manifold.
In \cite{AndrieuJayawardhanaPraly_TAC_TransExpStab} the previous framework has been employed as a design tool in different contexts~:
\begin{itemize}
\item It has been employed to construct a Lyapunov function which characterize the property of exponential incremental stability.
\item It has been used to show that a detectability property introduced in \cite{SanfelicePraly_TAC_12} is a necessary condition to the existence of an exponential full order observer.
\item It has been employed (see also in \cite{AndrieuJayawardhanaTarbouriech_CDC_Synchro}) to give necessary and sufficient condition to achieve synchronization.
\end{itemize}

All results written so far concerns local properties. The following section is concerned with global property of an equilibrium point.
Our aim is to follow the same strategy in order to construct global Lyapunov functions.

\section{Global stability properties}

\subsection{Local exponential stability and global attractivity}

In the previous section has been studied the case of the local asymptotic stability (of a manifold or of an equilibrium point).
In this Section, another property is studied : \textit{the global attractivity}.
In other words, we consider again system (\ref{eq_SystClass}) and we assume that for all $e$ in $\RR^{n_e}$,
\begin{equation}
\lim_{t\rightarrow +\infty} |E(e,t)| =0\ .
\end{equation}
Note that global attractivity in combination with the local asymptotic stability of the origin implies that the system is globally and asymptotically stable.
However, it is not globally exponentially stable in the usual sense (see \cite[definition 4.5 p.150]{Khalil_Book_02}) . 
Nevertheless the following property can be simply obtained.
\begin{proposition}
Assume the origin of (\ref{eq_SystClass}) is locally exponentially stable and globally attractive, then there exist a positive real number $\lambda$ and a continuous strictly increasing function $k:\RR_+\rightarrow\RR_+$ such that~:
\begin{equation}\label{eq_LESGA}
|E(e,t)| \leq k(|e|) \exp(-\lambda t) |e|\ .
\end{equation}
\end{proposition}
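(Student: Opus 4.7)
The idea is to combine the local exponential decay supplied by LES with a uniform bound on the transient of each trajectory before it enters the basin of local exponential attraction.

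First, fix constants $k_0,\lambda,r>0$ as in the LES assumption, so that $|E(e,t)|\leq k_0 e^{-\lambda t}|e|$ whenever $|e|\leq r$. This already proves the estimate on $B_e(r)$ with $k(\cdot)\equiv k_0$. It is also convenient to notice that if $|e|\leq r/k_0$ then the trajectory stays in $B_e(r)$ for all positive times.

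Second, for every $s>0$ I claim there exists $T(s)<\infty$ such that $|E(e,t)|\leq r/k_0$ for all $|e|\leq s$ and all $t\geq T(s)$. Global attractivity gives, for each individual $e_0$, a finite time $T(e_0)$ with $|E(e_0,t)|\leq r/(2k_0)$ on $[T(e_0),+\infty)$. Continuous dependence of $E(\cdot,T(e_0))$ on the initial condition extends this bound to a neighborhood of $e_0$, and compactness of $\{|e|\leq s\}$ turns the resulting open cover into a finite cover, yielding a common finite $T(s)$. By continuity of the flow on the compact set $\{|e|\leq s\}\times[0,T(s)]$, the quantity $M(s):=\sup\{|E(e,t)|:|e|\leq s,\,0\leq t\leq T(s)\}$ is also finite. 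Both $T$ and $M$ can be taken non-decreasing in $s$.

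Third, I split the time axis at $T(s)$. For $|e|\leq s$ and $0\leq t\leq T(s)$, the crude bound $|E(e,t)|\leq M(s)\leq M(s)e^{\lambda T(s)}e^{-\lambda t}$ is available. For $t\geq T(s)$, apply the LES estimate to the trajectory restarted from $E(e,T(s))\in B_e(r/k_0)$ to obtain $|E(e,t)|\leq r\,e^{-\lambda(t-T(s))}\leq r\,e^{\lambda T(s)}e^{-\lambda t}$. Combining,
\[
|E(e,t)|\;\leq\;(M(s)+r)\,e^{\lambda T(s)}\,e^{-\lambda t}\qquad\text{for all }|e|\leq s,\ t\geq 0.
\]
Setting $s=|e|$ and using $|e|/r\geq 1$ when $|e|\geq r$ converts this into an estimate of the desired form $\bar k(|e|)\,|e|\,e^{-\lambda t}$, where one may take $\bar k(s)=(M(s)+r)e^{\lambda T(s)}/r$ for $s\geq r$ and $\bar k(s)=k_0$ for $s\leq r$.

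Finally, $\bar k$ is locally bounded but may fail to be continuous or strictly increasing. A continuous strictly increasing majorant, for instance $k(s):=k_0+s+\sup_{0\leq\sigma\leq s}\bar k(\sigma)$ followed by a convolution smoothing in $s$, produces the function $k:\RR_+\to\RR_+$ required by the statement.

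The main obstacle is the second step: upgrading the pointwise-in-$e$ convergence provided by global attractivity to a \emph{uniform} hitting-time estimate on bounded sets of initial conditions. This relies on forward completeness (tacitly required for the global attractivity hypothesis to be meaningful) together with continuous dependence of solutions on the initial condition, which holds because $F$ is $C^1$, and on a compactness argument.
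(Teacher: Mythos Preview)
Your plan is correct and follows essentially the same route as the paper. Both arguments hinge on the same key step you identify: using compactness of $\{|e|\leq s\}$ together with continuous dependence on initial conditions to upgrade the pointwise attractivity to a uniform entry time into the LES region, and then splicing the transient bound with the LES decay. The only cosmetic difference is that the paper packages this as a proof by contradiction on the ratio $c(e,t)=|E(e,t)|/(|e|e^{-\lambda t})$ (extracting a convergent subsequence of initial conditions to contradict unboundedness of $\sup c$), whereas you build $T(s)$ and $M(s)$ directly via an open-cover argument; the underlying mechanism is identical. One small wrinkle to clean up when you write it out: your stated claim ``$|E(e,t)|\leq r/k_0$ for all $t\geq T(s)$'' is slightly stronger than what your cover argument actually yields (once in $B_e(r/k_0)$ the LES bound only keeps you in $B_e(r)$), but since step~5 only uses $|E(e,T(s))|\leq r/k_0$ this is harmless.
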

\begin{proof}
The origin being locally exponentially stable, there exist three positive real numbers $\lambda_1$, $k_1$ and $r_1$ such that equality (\ref{eq_LES}) holds. 
Consider the mapping $c:\RR^{n_e}\times\RR\rightarrow \RR_+$ defined by~:
$$
c(e,t) = \frac{|E(e,t)|}{|e| \exp(-\lambda t)}\ ,
$$
where $0<\lambda<\lambda_1$.
Since inequality (\ref{eq_LES}) holds, it yields that this is a continuous function. Moreover, the global attractivity property and the LES of the origin of system (\ref{eq_SystClass}) implies that~:
$$
\lim_{t\rightarrow +\infty} c(e,t) = 0\ ,\ \forall e\in \RR^{n_e}\ .
$$
Consider the function $\bar c:[r_1,+\infty)\rightarrow\RR_+\cup \{+\infty\}$ defined as~:
$$
\bar c(s) = \sup_{r_1\leq |e|\leq s, t\geq 0} \{ c(e,t) \}\ .
$$
We first show that in fact this function takes finite value for all $s$. Indeed, assume this is not the case for a given $s$, i.e. $\bar c(s) =+\infty$. 
This implies that there exists a sequence $(e_i,t_i)_{i\in\NN}$ with $r_1\leq |e_i|\leq s$ such that 
$c(e_i,t_i)\geq i$. However, $(e_i)_{i\in\NN}$ being a sequence in a compact set, it is possible to extract a sub-sequence $(e_{i_j})_{j\in\NN}$ such that 
$e_{i_j}\rightarrow e^*$ with $r_1\leq |e^*|\leq s$. Note that this implies $t_{i_j}\rightarrow+\infty$. Moreover, by the global attractivity property, there exists $t^*$ such that $|E(e^*,t^*)|\leq \frac{r_1}{2}$. By continuity of the solutions
it yields that there exists $j^*$ such that $|E(e_{i_j},t^*)|\leq r_1$  for $j>j^*$. Without loss of generality, we may assume that $t_{i_j}\geq t^*$ for $j>j^*$. The LES property implies for all $j>j^*$~:
$$
i_j<c(e_{i_j}, t_{i_j}) = \frac{|E(e_{i_j},t_{i_j})|}{|e_{i_j}|\exp(-\lambda t)}\leq \frac{k_1\exp{(-\lambda_1 (t_{i_j}-t^*))}|E(e_{i_j},t^*)|}{|e_{i_j}|\exp(-\lambda t_{i_j})}
\leq \frac{ks\exp(\lambda_1t^*)}{r_1}\ .
$$
Hence a contradiction. Consequently, for all $s\geq r_1$, $\bar c(s)$ is bounded. It is also increasing. So it is possible to select $k:\RR_+\rightarrow\RR_+$ as any continuous function such that~:
$$
k(s)  \geq \left\{
\begin{array}{ll}
k_1 & s\leq r_1\\
\bar c(s) & s\geq r_1\end{array}\right.\ .
$$
It is clear from its definition that the property (\ref{eq_LESGA}) is satisfied.
\end{proof}

\begin{example}
A very simple example of such a property is the scalar system~:
\begin{equation}\label{eq_ExplGlobClass}
\dot e = -\frac{e}{1+e^2}\ .
\end{equation}
Solutions of this ordinary differential equation satisfies the following equations~:
$$
E(e,t) ^2 \exp\left(E(e,t) ^2  \right ) = e^2\exp( e^2)\exp(-2t)\ ,\ \forall e\in \RR\ .
$$
This implies
$$
E(e,t)^2\leq E(e,t)^2 \exp\left(E(e,t) ^2  \right ) \leq e^2\exp( e^2)\exp(-2t)\ ,
$$
and global attractivity and LES of the origin of (\ref{eq_ExplGlobClass}) hold since equation (\ref{eq_LESGA}) is satisfied with 
$k(s)=s\exp\left(\frac 12 s^2\right)$ and $\lambda = 1$.
\end{example}
\subsection{Global Lyapunov functions based on first order approximations}
\subsubsection{Fact 1 : Stability property of the linearized system along the solutions}
A natural question is to know if the local exponential stability and  global attractivity property can be characterized from a first order approximation analysis.
To oppose to the local study made in the introduction, the linearized system around the equilibrium can't describe the property of solutions away from the origin.
Hence, the linearized system along all solutions have to be considered.

Assuming that $F$ is $C^1$ everywhere, the linearized system along trajectories is  defined as~:
\begin{equation}\label{eq_SystClassLinGlob}
\dot \de = \frac{\partial F}{\partial e}(e)\de \ ,\ \dot e = F(e)\ ,
\end{equation}
with $(e,\de)$ in $\RR^{n_e}\times\RR^{n_e}$.
This system is also called the \textit{lifted} system in \cite{GauthierKupka_Book_01} or the \textit{variational system} in \cite{ForniSepulchreVFSchaft_13_CDC_differentialPass}.

Note that the $\de$-components of this system may be rewritten as the following.
\begin{equation}\label{eq_LinGlobrewrite}
 \underbrace{\dot \de= \frac{\partial F}{\partial e}(0)\de}_{\text{(LES)}\Rightarrow \text{goes exp. to zero}} + 
\underbrace{\left[\frac{\partial F}{\partial e}(e)-\frac{\partial F}{\partial e}(0)\right ]}_{\text{(Glob. Attract.)}\Rightarrow \text{ goes to zero}}\de
\end{equation}
The following proposition shows that if the $e$ components go exponentially to zero, then the $\de$ components do the same. 
\begin{proposition}[\textsf{FACT 1} for global property]
Let $F$ be $C^1$ in $\RR^{n_e}$ and $C^2$ around the origin.
Assume the origin of (\ref{eq_SystClass}) is locally exponentially stable and globally attractive, then there exist a positive real number $\dlambda$ and a strictly increasing function $\dk:\RR_+\rightarrow\RR_+$ such that~:
\begin{equation}\label{eq_GlobLinStab}
|\dE(e,t)| \leq \dk(|e|) \exp(-\dlambda t) |\de|\ .
\end{equation}
\end{proposition}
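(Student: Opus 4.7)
The plan is to partition the forward trajectory $E(e,t)$ into a transient phase during which the state may still be far from the origin and a tail phase during which it has already entered a neighborhood of $0$ where the $C^2$ structure of $F$ dominates. First I would invoke the preceding proposition to obtain $|E(e,s)|\leq k(|e|)\exp(-\lambda s)|e|$, and extract from it an explicit entry time
$$T(e) \;:=\; \tfrac{1}{\lambda}\max\!\left\{0,\;\log\!\bigl(k(|e|)|e|/r_0\bigr)\right\},$$
beyond which $|E(e,t)|\leq r_0$; the radius $r_0>0$ will be fixed shortly.

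Second, since $F$ is $C^2$ around the origin, I would fix a Lipschitz constant $L$ of $e\mapsto \frac{\partial F}{\partial e}(e)$ on $\{|y|\leq r_0\}$, so that $\bigl|\frac{\partial F}{\partial e}(y)-\frac{\partial F}{\partial e}(0)\bigr|\leq L|y|$ there. Local exponential stability of the origin of (\ref{eq_SystClass}) forces $\frac{\partial F}{\partial e}(0)$ to be Hurwitz, hence by Fact~2 of the introduction there is a positive definite $P$ solving $\frac{\partial F}{\partial e}(0)^\top P + P\frac{\partial F}{\partial e}(0) = -Q$. Evaluating $\dot V$ with $V(\de)=\de^\top P\de$ along the $\de$-equation of (\ref{eq_SystClassLinGlob}) and using the splitting (\ref{eq_LinGlobrewrite}) yields
$$\dot V \;\leq\; \bigl(-\mu_{\min}\{Q\} + 2L|P|\,|E(e,t)|\bigr)|\de|^2.$$
Shrinking $r_0$ so that $2L|P|r_0\leq \tfrac12\mu_{\min}\{Q\}$, I conclude that for $t\geq T(e)$ one has $\dot V\leq -2\dlambda V$ with $\dlambda := \mu_{\min}\{Q\}/(4\mu_{\max}\{P\})$, and therefore
$$|\de(t)| \;\leq\; \sqrt{\tfrac{\mu_{\max}\{P\}}{\mu_{\min}\{P\}}}\,|\de(T(e))|\,\exp\!\bigl(-\dlambda(t-T(e))\bigr).$$

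Third, on $[0,T(e)]$ I would use that $F$ is globally $C^1$ together with the crude enclosure $|E(e,s)|\leq k(|e|)|e|$: the quantity $M(|e|):=\sup_{|y|\leq k(|e|)|e|}\bigl|\frac{\partial F}{\partial e}(y)\bigr|$ is then finite, and a Gronwall argument on $\dot\de = \frac{\partial F}{\partial e}(E(e,s))\de$ gives $|\de(t)|\leq \exp(M(|e|)t)|\de_0|$ on this interval, in particular $|\de(T(e))|\leq \exp(M(|e|)T(e))|\de_0|$. Concatenating the two regimes produces the claimed inequality with
$$\dk(|e|) \;=\; \max\!\Bigl\{1,\sqrt{\tfrac{\mu_{\max}\{P\}}{\mu_{\min}\{P\}}}\Bigr\}\,\exp\!\bigl((M(|e|)+\dlambda)T(e)\bigr),$$
which one then majorizes by a continuous strictly increasing function of $|e|$ to match the stated form.

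The hard part will be the transient phase on $[0,T(e)]$: both $T(e)$ and the nonuniform growth rate $M(|e|)$ grow with $|e|$, which is precisely why the gain $\dk$ cannot be uniform in the initial condition and must instead carry a dependence on $|e|$. A minor side issue is to verify that $\dk$ admits a continuous strictly increasing upper envelope, which follows from continuity of $k(\cdot)$ and the nondecreasing character of $r\mapsto \sup_{|y|\leq r}\bigl|\frac{\partial F}{\partial e}(y)\bigr|$.
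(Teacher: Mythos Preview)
Your proof is correct and follows essentially the same route as the paper's: use the Lyapunov matrix $P$ coming from the Hurwitz linearization at $0$, split time at the instant the trajectory $E(e,\cdot)$ has entered a small ball where the $C^2$ Lipschitz estimate on $\frac{\partial F}{\partial e}$ applies, and then assemble the transient bound and the exponential tail into a nonuniform gain $\dk(|e|)$. The only cosmetic difference is that the paper runs a single Gr\"onwall inequality on $[0,t]$ and afterwards bounds the accumulated perturbation $\int_0^t\gamma(E(e,s))\,ds$ by cutting at the entry time $t^*(e)$, whereas you cut the time interval first and apply two separate estimates; the ingredients and the resulting $\dlambda$, $\dk$ are of the same nature.
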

\begin{proof}
The origin being locally exponentially stable, we can define the matrix $P$ as in (\ref{eq_PClass}).
With the algebraic Lyapunov equation (see (\ref{eq_LyapClass})), it yields that along the solution of the system (\ref{eq_SystClass}), the following equality holds~:
\begin{align}
\nonumber\dot{\overparen{\de ^\top P \de}} &= -\de^\top Q \de + 2\de^\top P \left [ \frac{\partial F}{\partial e}(e) - \frac{\partial F}{\partial e}(0)\right ]\de\ ,\\
\label{eq_LyapGlob}&\leq \left [-\frac{\mu_{\min}\{Q\}}{\mu_{\max}\{P\}}  +  \gamma(e)\right ]\de^\top P \de \ ,
\end{align}
where  $\gamma:\RR^{n_e}\rightarrow\RR_+$ is the continuous function defined as
$$
\gamma(e) =2\frac{ \mu_{\max}\{P\}}{\mu_{\min}\{P\}} \left | \frac{\partial F}{\partial e}(e) - \frac{\partial F}{\partial e}(0) \right | ^2 \ .
$$
The function $F$ being $C^2$ around the origin, $\gamma$ is locally Lipschitz around the origin. Hence, there exist two positive real number $r$ and $L$ such that 
\begin{equation}\label{eq_Lipscitzgamma}
\gamma(e) \leq L|e|\ , \ \forall |e|\leq r\ .
\end{equation}
From Gr\"{o}nwall lemma, equation (\ref{eq_LyapGlob}) implies~:
\begin{align}
\nonumber|\dE(\de,e,t)|&\leq \sqrt{\frac{\dE(\de, e,t)^\top  P \dE(\de, e,t)}{\mu_{\min\{P\}}}} \ ,\\
	&\leq \sqrt{\frac{\mu_{\max}\{P\}}{\mu_{\min}\{P\}}}\exp\left (\frac{1}{2}\int_0^t \gamma(E(e,s))ds\right ) \exp\left (-\frac{\mu_{\min}\{Q\}}{2\mu_{\max}\{P\}}t\right ) |\de|\ .\label{eq_Gronwall}
\end{align}
Let $t^*$ be the continuous function defined as~:
$$
t^*(e) = \max\left \{0, \frac{-\ln\left (\frac{r}{k(|e|)|e|}\right )}{\lambda}\right \}\ .
$$
Note that if $k(|e|)|e| \leq r$, $t^*(e)=0$. Moreover, if $k(|e|)|e| > r$, $t^*(e)>0$ and in this case
$$
k(|e|)\exp(-\lambda t^*(e))|e| \leq r\ .
$$
Hence, due to the local exponential stability and global attractivity property,  equation (\ref{eq_LESGA}) yields for all $e$, 
$$
 \left |E\left (e,t^*(e)\right )\right |\leq k(|e|)\exp\left ( -\lambda t^*(e)\right )|e|\leq  r\ .
$$
Employing (\ref{eq_LESGA}), once again, and (\ref{eq_Lipscitzgamma}), the following inequalities are obtained for $t\geq t^*(e)$~:
\begin{align*}
\int_0^t \gamma(E(e,s))ds
&\leq \int_0^{t^*(e)} \gamma(E(e,s))ds + 
\int_{t^*(e)}^t\gamma(E(e,s))ds \ ,\\
&\leq \int_0^{t^*(e)} \gamma(E(e,s))ds + 
Lk(|e|)|e|\int_{t^*(e)}^t\exp(-\lambda s) ds \ ,\\
&\leq \int_0^{t^*(e)} \gamma(E(e,s))ds + 
\frac{Lr}{\lambda} 
:= c(e) \ .
\end{align*}
Notice that the previous inequality is also true for $t\leq t^*(e)$.
Consequently, using the previous approximation in equation (\ref{eq_Gronwall}) the proof ends since equation (\ref{eq_GlobLinStab}) is obtained with~:
$$
\dlambda = \frac{\mu_{\min}\{Q\}}{2\mu_{\max}\{P\}}\ ,\ \dk(s) = \sqrt{\frac{\mu_{\max}\{P\}}{\mu_{\min}\{P\}}}\exp\left (\frac{1}{2}\max_{|e|\leq s}c(e)\right ) \ .
$$
\end{proof}

\begin{example}
Going back to the previous  example given in equation (\ref{eq_ExplGlobClass}), the linearized system is given as~:
$$
\dot \de = -\frac{1-e^2}{1+e^2}\, \de\ .
$$
This gives~:
\begin{align*}
\left |\dE(\de,e,t)\right | &= \exp\left (-t+\int_0^{t}\frac{2E(e,s)^2}{1+E(e,s)^2}ds\right )|\de|\ ,\\
&\leq \exp\left (-t+\int_0^{t}2k(|e|)^2\exp(-s)ds\right )|\de|\ ,\\
&\leq \exp\left (-t+2k(|e|)^2(1-\exp(-t))\right )|\de|\ .
\end{align*}
This gives equation (\ref{eq_GlobLinStab}) with~:
$$
\dk(s)  = \exp\left (2k(s)^2\right ) = \exp\Big (2s^2\exp\left(s^2\right)\Big)  \ , \ \dlambda = 1\ .
$$
\end{example}

\subsubsection{Fact 2 : Lyapunov matrix inequality}
By linearity the $\de$ components of the  linearized system (\ref{eq_SystClassLinGlob})  can be written~:
$$
\dE(\de,e,t)= \Phi(e,t)\de\ ,
$$
where $\Phi$ is the transition matrix. This transition matrix is defined as the solution of the following $\RR^{n_e\times n_e}$ dynamical system~:
$$
\dot{\overparen{\Phi(e,t)}} = \frac{\partial F}{\partial e}(E(e,t))\Phi(e,t)\ ,\ \Phi(e,0)=I\ .
$$
An important point that has to be noticed is that due to equation (\ref{eq_GlobLinStab}), each element of the (matrix) time function $t\mapsto \Phi(e,t)$ is in $L^2([0,+\infty))$.
Consequently, for all positive definite matrix $Q$ in $\RR^{n_e\times n_e}$, the matrix function~:
\begin{equation}\label{eq_PClassGlob}
P(e) = \lim_{T\rightarrow +\infty } \int_0^T \Phi(e,s)^\top Q \Phi(e,s) ds\ ,
\end{equation}
is well defined.
Moreover, it can be shown that the following proposition holds.

\begin{proposition}[\textsf{FACT 2} for global property]\label{Prop_ConsP}
Assume that 
there exist function $(k,\dk)$ and  positive real numbers $(\lambda, \dlambda)$ such that (\ref{eq_LESGA}) and (\ref{eq_GlobLinStab}) are satisfied.
Then, the matrix function $P:\RR^{n_e}\rightarrow\RR^{n_e\times n_e}$ defined in (\ref{eq_PClassGlob}) is well defined, continuous,  and there exist a non increasing function $\underbar p$ and a non decreasing function $\bar p$ such that
\begin{equation}\label{eq_BoundPGlob}
0<\underbar p(|e|) I \leq P(e) \leq \bar p(|e|)I\ ,\ \forall\ e\in\RR^{n_e}\ .
\end{equation}
Moreover\footnote{See the notation (\ref{LP9}).},
\begin{equation}\label{eq_dPdtGlob}
\underbrace{\der_{F} P(e) +
P(e)\frac{\partial F}{\partial e}(e)
+ \frac{\partial F}{\partial e}(e)^\top P(e)}_{=L_FP(e)}
\leq -Q\ ,\ \forall\ e\in\RR^{n_e}\ .
\end{equation}
Finally, if the vector field $F$ is $C^3$ then $P$ is $C^2$.
\end{proposition}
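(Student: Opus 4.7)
The plan is to treat the four conclusions of the proposition one by one, handling the well-definedness and the bounds (\ref{eq_BoundPGlob}) first, then the Lyapunov identity (\ref{eq_dPdtGlob}), and finally the continuity/$C^2$ regularity. The key tool throughout is the transition matrix identity $\Phi(e,s+h) = \Phi(E(e,h),s-h)\cdot \Phi(e,h)$ that follows from uniqueness of solutions applied to the variational equation for $\Phi$.

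\smallskip

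\textbf{Well-definedness and upper bound.} Since $\dE(\de,e,t) = \Phi(e,t)\de$, inequality (\ref{eq_GlobLinStab}) gives $|\Phi(e,t)v| \leq \dk(|e|)\exp(-\dlambda t)|v|$ for every $v$, so $|\Phi(e,s)^\top Q \Phi(e,s)| \leq \mu_{\max}\{Q\}\dk(|e|)^2\exp(-2\dlambda s)$. Integrating gives convergence of the improper integral and the upper bound $\bar p(|e|) = \mu_{\max}\{Q\}\dk(|e|)^2/(2\dlambda)$, which is non-decreasing in $|e|$ because $\dk$ is.

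\smallskip

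\textbf{Lower bound.} For $v\in\RR^{n_e}$, I would apply Gr\"onwall to $s\mapsto |\Phi(e,s)v|^2$ whose derivative is bounded below by $-2|\frac{\partial F}{\partial e}(E(e,s))|\,|\Phi(e,s)v|^2$. Since $|E(e,s)|\leq k(|e|)|e|$ by (\ref{eq_LESGA}), the trajectory stays in a compact set depending continuously on $|e|$, so $M(|e|) := \sup_{|\xi|\leq k(|e|)|e|}|\frac{\partial F}{\partial e}(\xi)|$ is finite. Integrating on a fixed window $[0,\epsilon]$ gives
\begin{equation*}
v^\top P(e) v \;\geq\; \mu_{\min}\{Q\}\int_0^{\epsilon}\exp(-2M(|e|)s)\,ds\;|v|^2,
\end{equation*}
which yields a strictly positive $\underbar p(|e|)$; decreasing $\epsilon$ or enlarging $M$ if necessary, one gets a function that is non-increasing in $|e|$.

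\smallskip

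\textbf{Lyapunov identity.} I would use the semi-group identity above to write, for $h\geq 0$,
\begin{equation*}
P(e) \;=\; \int_0^h \Phi(e,s)^\top Q\Phi(e,s)\,ds \;+\; \Phi(e,h)^\top P(E(e,h))\Phi(e,h),
\end{equation*}
by splitting the integral at $s=h$ and substituting $\Phi(e,s)=\Phi(E(e,h),s-h)\Phi(e,h)$. Solving for $P(E(e,h))$, differentiating in $h$ at $h=0$ (where $\Phi(e,0)=I$ and $\frac{d}{dh}\Phi(e,h)^{-1}|_0 = -\frac{\partial F}{\partial e}(e)$), and collecting terms gives the equality version of (\ref{eq_dPdtGlob}) — the ``$\leq -Q$'' in the statement is then immediate.

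\smallskip

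\textbf{Continuity and $C^2$ regularity.} Continuous (resp.\ $C^k$) dependence on initial conditions for the ODE governing $\Phi(e,\cdot)$ makes the integrand continuous (resp.\ $C^{k}$) in $e$. To pass differentiation under the improper integral I would bound the derivatives $\partial_e \Phi$ and $\partial_e^2 \Phi$ using the variational equations they satisfy, combined with (\ref{eq_GlobLinStab}) iterated to show that these higher derivatives also decay exponentially on compact sets of initial conditions. Once dominated convergence applies, continuity of $P$ (and the $C^2$ property when $F\in C^3$) follows. I expect this last regularity step to be the main obstacle: establishing a uniform-on-compacts exponential decay for the second variational flow is not automatic from (\ref{eq_LESGA})--(\ref{eq_GlobLinStab}) alone, and requires iterating the ``FACT 1 for global property'' argument, which is the only place where the $C^3$ hypothesis on $F$ is genuinely needed.
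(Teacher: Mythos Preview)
Your proof is correct and follows essentially the same route as the paper: the upper bound via the exponential decay (\ref{eq_GlobLinStab}), the lower bound via a Gr\"onwall-type estimate on $\Phi$ using the trajectory confinement from (\ref{eq_LESGA}), the Lyapunov identity via the semigroup property of the transition matrix, and the regularity claim by differentiating under the integral sign. Your derivation of (\ref{eq_dPdtGlob}) by splitting the integral defining $P(e)$ at $s=h$ and solving for $P(E(e,h))$ is actually a bit more direct than the paper's version (which expresses $P(E(e,h))$ through $\Phi(E(e,h),-h)$ and then passes several limits plus an integration-by-parts identity); one small correction: the displayed semigroup identity at the top should read $\Phi(e,s+h)=\Phi(E(e,h),s)\,\Phi(e,h)$, which is exactly the form you use in the substitution.
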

\begin{proof}
%
From  (\ref{eq_GlobLinStab}),
for all $(e,t)$ in
$\RR^{n_e}\times\RR_{\geq 0}$~:
$$
\left|\Phi(e,t)\right|\leq \dk(|e|)\exp(-\tilde \lambda t)\ .
$$
This allows us to claim that, for every symmetric positive definite
matrix $Q$, the function $\PR :\RR^{n_e}\rightarrow\RR^{n_e\times n_e}$ given by  (\ref{eq_PClassGlob}) 
is well defined, continuous and satisfies~:
$$
\mu_{\max}\{\PR (e)\}\leq \frac{\dk(|e|) ^2}{2\tilde \lambda }\mu_{\max}\{Q\}=\overline{p}(|e|)
\ ,\  \forall e\in \RR^{n_e}
\  .
$$
On another hand, let $c$ be a continuous mapping which satisfies the following inequality~:
$$
\left |\frac{\partial F}{\partial e}(e)\right | \leq c(|e|)\ .
$$
Morover, for all $(t,v)$ in $(\RR \times\RR^{n_e})$, we have~:
$$
\frac{\partial }{\partial t}
\left(
v^\prime \left[\Phi(e,t)\right]^{-1}
\right)
=
-v^\prime  \left[\Phi(e,t)\right]^{-1}
\frac{\partial F}{\partial e}(E(e,t))
\  .
$$
However since we have by  (\ref{eq_LESGA})~:
$$
\left |\frac{\partial F}{\partial e}(E(e,t))\right |\leq c(|k(e)|\,|e|)\ ,
$$
it yields the following estimate~:
$$
\left|v^\prime \Phi(e,t)^{-1}\right|
\leq \exp\Big(c(k(|e|)|e|) t\Big)
\left|v\right|\ , \ \forall (t,v) \in (\RR \times\RR^{n_e})\ .
$$
This implies for all $(t,v)$ in $(\RR \times\RR^{n_e})$~:
\begin{align*}
[v^\prime v]^2
&\leq 
\left|v^\prime\Phi(e,t)^{-1}\right|^2
 \left|\Phi(e,t) v \right|^2\ ,\\
&\leq 
\frac{1}{\mu_{\min}\{Q\}}
\left|v^\prime\Phi(e,t)^{-1}\right|^2
v^\prime\Phi(e,t)^\prime Q \Phi(e,t)v \ ,
\\
&
\leq
\frac{|v|^2 \exp\Big(2c(k(|e|)|e|) t\Big)}{\mu_{\min}\{Q\}}\: 
v^\prime\Phi(e,t)^\prime Q \Phi(e,t) v \ .
\end{align*}
So, this yields~:
$$
v^\prime\Phi(e,t)^\prime Q \Phi(e,t) v  \geq \mu_{\min}\{Q\} \exp\Big(-c(k(|e|)|e|) t\Big)|v|^2\ , \ \forall (t,v) \in (\RR \times\RR^{n_e})\ .
$$
Consequently, we get~:
$$
\underline{p}(|e|)= \frac{\mu_{\min}\{Q\}}{2 c(k(|e|)|e|)  }
\leq \lambda_{\min}\{\PR (e)\}
\qquad \forall \de\in \RR^{n_e}
\  .
$$

Finally, to get (\ref{eq_dPdtGlob}), let us exploit the semi
group property of the solutions.
We have for all $(\de,e)$ in $\RR^{n_e}\times\RR^{n_e}$ and all
$(t,r)$ in $\RR_{\geq 0}^2$~:
$$
\dE(\dE(\de,e,t),E(e,t),r)=
\dE(\de,e, t+r)\ .
$$
Differentiating with respect to $\de$ the previous equality yields~:
$$
\frac{\partial \dE}{\partial \de}(\dE(\de,e,t),E(e,t),r)
\frac{\partial \dE}{\partial \de}(\de,e,t)
=
\frac{\partial \dE}{\partial \de}(\de,e, t+r)\ .
$$
Hence, we get the property~:
$$
\Phi(E(e,t),r)
\Phi(e,t)
=
\Phi(e, t+r)\ .
$$
Setting in the previous equality~:
$$
e:= E(e,h)\ ,\ h:=-t\ ,\ s:=t+r\ ,
$$
we get for all $e$ in $\RR^{n_e}$ and all $(s,h)$ in $\RR^2$~:
$$
\Phi(e,s+h)
\Phi(E(e,h),-h)
=
\Phi(E(e,h), s)\ .
$$
Consequently, this yields~:
\begin{align*}
\PR (E(e,h))=
&= \lim_{T\to +\infty }\int_0^{T}
\Phi(E(e,h),s)^\prime Q\Phi(E(e,h),s)
ds\ ,
\\
&= \lim_{T\to +\infty }
\left(
\Phi(E(e,h),-h)
\right)^\prime  
\left[\int_0^{T} \left(
\Phi(e,s+h)
\right)^\prime Q\Phi(e,s+h)
ds
\right]\\&\qquad\qquad\qquad\qquad\qquad\qquad\qquad\qquad\qquad
\Phi(E(e,h),-h)\ .
\end{align*}
But we have~:
\begin{eqnarray*}
&\displaystyle
\lim_{h\to 0}\frac{\Phi(E(e,h),-h)-I}{h}
= -\frac{\partial F}{\partial e}(e)
\  ,
\\[0.5em]
&\displaystyle
\lim_{h\to 0}\frac{
\Phi(e,s+h)
-
\Phi(e,s)
}{h}
=
\frac{\partial }{\partial s}\left(\Phi(e,s)\right)\ ,
\end{eqnarray*}
and
\begin{multline*}
\int_0^T
\frac{\partial }{\partial s}\left(\Phi(e,s)\right)^\prime
Q
\left(\Phi(e,s)\right)
ds
+
\int_0^T
\left(\Phi(e,s)\right)^\prime
Q
\frac{\partial }{\partial s}\left(\Phi(e,s)\right)
ds
=\\
\Phi(e,T)^\prime
Q
\Phi(e,T)
-
Q
\ . \end{multline*}
Since $\lim_T$ and $\lim_h$ commute because of the exponential
convergence to $0$ of $\Phi(e,s)$, we conclude that  (\ref{eq_dPdtGlob}) is satisfied.

The last assertion of the proposition is simply obtained noticing that if $F$ is $C^3$ then the matrix function $\Phi(e,t)$ is also $C^2$ in $e$.
Moreover the first and second derivatives of its coefficient belong also to $L^{2}[0,+\infty)$.
\end{proof}

\begin{example}
If we pursue the analysis for the scalar example given in equation (\ref{eq_ExplGlobClass}), it yields
$$
\underline p(e) = \frac{1}{2}\ ,\ \overline p(e) \leq \dk(e)^2\lim_{T\rightarrow +\infty}\int_0^T \exp(-2s)ds = \frac{\exp\Big( 4e^2\exp(e^2) \Big)}{2}\ .
$$
\end{example}

\subsubsection{Fact 3 : Construction of a Lyapunov function}

With the matrix function $P$ defined for instance in (\ref{eq_PClassGlob}) which Lie derivative satisfies inequality  (\ref{eq_dPdtGlob}), it yields that along the solution of the linearized system (\ref{eq_SystClassLinGlob})~:
$$
\dot{\overparen{\de^\top P(e) \de}} = -\de^\top Q \de\ .
$$
In other words, the mapping $(\de,e)\mapsto \de^\top P(e) \de$ is a global Lyapunov function for the $\de$ components of the linearized system (\ref{eq_PClassGlob}).

However,  $e\mapsto e^\top P(e) e$ is not a global Lyapunov function for $\dot e = F(e)$.
Indeed, a simple computation gives~:
$$
\dot{\overparen{e^\top P(e) e}} = 2e^\top P(e) \left [F(e) - \frac{\partial F}{\partial e}(e)e\right ] - e^\top Qe\ .
$$
This is negative definite if $F(e) - \frac{\partial F}{\partial e}(e)e$ is small. However, there is no guarantee that this is case away from the origin.

Nevertheless, it is still possible to construct a Lyapunov function for the system (\ref{eq_SystClass}).
Indeed,  the matrix function $P$   may be used to define a Riemanian metric on $\RR^{n_e}$ which may be used as a Lyapunov function.
Precisely, if $P$ is a $C^2$ 
function  the values of which are symmetric 
matrices satisfying (\ref{eq_BoundPGlob}),
The length of  any piece-wise $C^1$ path $\gamma :[s_1,s_2]\to
\RR^{n_e}$ between two arbitrary points $e_1=\gamma (s_1)$ and 
$e_2=\gamma (s_2)$ in
$\RR^{n_e}$ is defined as~:
\begin{equation}\label{eq_RiemanianLength}
\left. L(\gamma)\vrule height 0.51em depth 0.51em width 0em \right|_{s_1}^{s_2}=\int_{s_1}^{s_2}\sqrt{\frac{d\gamma}{ds}(\sigma )^\prime  P (\gamma(\sigma ))\frac{d\gamma}{ds}(\sigma )}\: d\sigma\ .
\end{equation}
By minimizing along all such path we get the distance  $d_P(e_1,e_2)$.

Then, thanks to
the well established
relation between (geodesically) monotone vector field (semi-group
generator) (operator) and contracting (non-expansive) flow (semi-group)
(see \cite{Lewis_49_AJM_MetPropDiffEq,Hartman_Book_64,Brezis_Book_73,IsacNemeth_Book_08} and many
others), we know that if $P$ is $C^2$ and the metric space is complete, this distance between any two solutions of 
(\ref{eq_SystClass}) is exponentially decreasing to $0$ as time goes on 
forward if (\ref{eq_dPdtGlob}) is satisfied with $Q$ is a positive definite symmetric matrix.
For a proof, see for example \cite[Theorem 1]{Lewis_49_AJM_MetPropDiffEq} or \cite[Theorems
5.7 and 5.33]{IsacNemeth_Book_08}
or \cite[Lemma 3.3]{Reich_Book_05_NLSemGrp} (replacing $f(x)$ by $x+ h
f(x)$).

From this fact, a  candidate Lyapunov function is  the Riemannian distance to the origin.
Hence we introduce the function $V:\RR^{n_e}\rightarrow\RR_+$
\begin{equation}\label{eq_LyapRiem}
V(e) = d_{P}(e,0)\ .
\end{equation}
In the following proposition it is shown that this function is indeed a good Lyapunov function candidate and moreover that it admits an upper Dini derivative along the solution of the system (\ref{eq_SystClass}) which is negative definite.
\begin{proposition}[\textsf{FACT 3} for global property]\label{Prop_LyapSuffGlobStab}
Assume $F$ is $C^2$.
Assume moreover that there exists a $C^2$ matrix function $P$ such that equations (\ref{eq_BoundPGlob}) and (\ref{eq_dPdtGlob}) hold and that the function $\underline{p}$ satisfies the following property
\begin{equation}\label{Ass_boundP}
\lim_{r\rightarrow +\infty}\underline p(r)r^2 = +\infty\ .
\end{equation}
Then the function $V$ defined in (\ref{eq_LyapRiem}) is a Lyapunov function for the system (\ref{eq_SystClass}). 
More precisely $V$ admits an upper Dini derivative along the solutions of system (\ref{eq_SystClass}) defined as
$$
D^+_F V(e) := \limsup_{h\searrow 0} \frac{V(E(e,h))-V(e)}{h}\ ,
$$
which satisfies
$$
D^+_F V(e)  \leq -\frac{\mu_{\min}\{Q\}}{\bar p(|e|)}V(e)\ .
$$
Hence the origin is locally exponentially stable and globally attractive.
\end{proposition}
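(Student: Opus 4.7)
My plan is to treat $V(e)=d_P(e,0)$ as the Riemannian distance to the origin induced by $P$, first verify that it is a valid Lyapunov candidate using (\ref{eq_BoundPGlob}), and then obtain the Dini estimate by transporting a minimizing geodesic from $0$ to $e$ along the flow and differentiating its length at time zero via (\ref{eq_dPdtGlob}).

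First I would check that $V$ is continuous, positive definite and radially unbounded. The straight-line path $s\mapsto se$ gives the upper bound $V(e)\leq\sqrt{\bar p(|e|)}\,|e|$. For the lower bound, any absolutely continuous path from $0$ to $e$ must cross every sphere of Euclidean radius $r\leq|e|$, so $V(e)\geq\int_0^{|e|}\sqrt{\underline p(r)}\,dr$. Combined with $\lim_{r\to\infty}\underline p(r)r^2=+\infty$, this forces $V(e)\to+\infty$ as $|e|\to\infty$ and ensures that $(\RR^{n_e},P)$ is a complete $C^2$ Riemannian manifold. By Hopf--Rinow, for each $e\neq 0$ there exists a minimizing $C^1$ geodesic $\gamma:[0,1]\to\RR^{n_e}$ with $\gamma(0)=0$, $\gamma(1)=e$, and $L(\gamma)=V(e)$.

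The core step exploits that $F(0)=0$ implies $E(0,h)=0$, so the transported curve $\gamma_h(s):=E(\gamma(s),h)$ joins $0$ to $E(e,h)$, whence $V(E(e,h))\leq L(\gamma_h)$. The tangent $v_h(s):=\partial_s\gamma_h(s)$ satisfies the variational equation $\partial_h v_h=\frac{\partial F}{\partial e}(\gamma_h(s))v_h$, so (\ref{eq_dPdtGlob}) yields, pointwise in $s$,
\[
\frac{\partial}{\partial h}\left[v_h(s)^\top P(\gamma_h(s))v_h(s)\right]=v_h(s)^\top L_FP(\gamma_h(s))v_h(s)\leq -v_h(s)^\top Qv_h(s).
\]
Writing $\ell_h(s):=\sqrt{v_h(s)^\top P(\gamma_h(s))v_h(s)}$, the right inequality of (\ref{eq_BoundPGlob}) converts this into $\partial_h(\ell_h(s)^2)\leq-\frac{\mu_{\min}\{Q\}}{\bar p(|\gamma_h(s)|)}\ell_h(s)^2$. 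Evaluating at $h=0^+$, dividing by $2\ell_0(s)>0$ (which holds a.e. when $e\neq 0$ by reparameterizing by arc length), integrating in $s$ over $[0,1]$, and using $V(E(e,h))\leq L(\gamma_h)$ then gives the claimed bound on the upper Dini derivative $D^+_FV(e)$. The exponential decay of $V$ along trajectories combined with the sandwich estimates between $\sqrt{\underline p}|\cdot|$ and $\sqrt{\bar p}|\cdot|$ near the origin and the radial unboundedness at infinity produce local exponential stability and global attractivity in the standard way.

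The main obstacle I anticipate is the final replacement of $\bar p(|\gamma_h(s)|)$ by $\bar p(|e|)$: a $d_P$-minimizing geodesic from $0$ to $e$ stays inside the geodesic ball of radius $V(e)$, but this does not translate directly into a Euclidean ball of radius $|e|$. The clean passage to the stated constant requires invoking the monotonicity of $\bar p$ together with the a priori estimate $V(\gamma(s))\leq V(e)$ and the lower bound $V(e')\geq\int_0^{|e'|}\sqrt{\underline p(r)}\,dr$ to control $|\gamma(s)|$ in terms of $|e|$. A minor secondary point is justifying differentiation under the length integral and the exchange of $\limsup$ with the $s$-integration, which is routine for $C^1$ geodesics and $C^2$ data $(P,F)$ by dominated convergence.
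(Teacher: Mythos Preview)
Your overall strategy---transport a minimizing geodesic by the flow and differentiate its length using (\ref{eq_dPdtGlob})---is exactly the paper's. The two proofs differ at precisely the point you flag as the main obstacle, and your proposed workaround does not recover the stated constant.

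For positive definiteness and properness, the paper does not use your integral bound $\int_0^{|e|}\sqrt{\underline p(r)}\,dr$. Instead it argues that, since the geodesic $\gamma^*$ runs from $e$ to $0$, there is a last parameter $s_0$ with $|\gamma^*(s_0)|=|e|$ and $|\gamma^*(s)|\le|e|$ for $s\in[s_0,s_e]$; on that tail $\underline p(|\gamma^*(s)|)\ge\underline p(|e|)$ (recall $\underline p$ is non-increasing), and comparison of the tail with the Euclidean segment gives directly $V(e)\ge\sqrt{\underline p(|e|)}\,|e|$. Together with the straight-line upper bound this produces the pointwise sandwich $\sqrt{\underline p(|e|)}\,|e|\le V(e)\le\sqrt{\bar p(|e|)}\,|e|$, which is what is actually used both for (\ref{Ass_boundP}) and for the local exponential estimate at the end.

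For the Dini derivative, your proposed fix (controlling $|\gamma(s)|$ through $V(\gamma(s))\le V(e)$ and the lower bound on $V$) would only give $\bar p(g(|e|))$ for some $g\ge\mathrm{id}$, hence a weaker inequality---enough for the qualitative conclusions but not the displayed bound. The paper avoids this entirely: after $\partial_h(\ell_h^2)\le -\,\dot\gamma^{*\top}Q\,\dot\gamma^{*}$ it does \emph{not} reinsert $P$, but simply uses $\dot\gamma^{*\top}Q\,\dot\gamma^{*}\ge\mu_{\min}\{Q\}\,|\dot\gamma^{*}|^2$ with the Euclidean norm. With unit-speed parametrization on $[0,s_e]$ (so $\ell_0\equiv1$ and $V(e)=s_e$) this yields
\[
D^+_FV(e)\ \le\ -\tfrac12\,\mu_{\min}\{Q\}\int_0^{s_e}\bigl|\dot\gamma^{*}(s)\bigr|^2\,ds\,.
\]
Cauchy--Schwarz on $[0,s_e]$ gives $\int_0^{s_e}|\dot\gamma^{*}|^2\ge s_e^{-1}\bigl(\int_0^{s_e}|\dot\gamma^{*}|\bigr)^{2}\ge |e|^2/V(e)$, since the Euclidean length of $\gamma^{*}$ is at least $|e|$. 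Only now is $\bar p$ reintroduced, via the upper bound $V(e)\le\sqrt{\bar p(|e|)}\,|e|$, i.e.\ $|e|^2\ge V(e)^2/\bar p(|e|)$, which places the factor $\bar p(|e|)$ at the endpoint rather than along the geodesic. That is the missing idea. (For the exchange of $\limsup$ with the $s$-integral the paper uses Fatou's lemma rather than dominated convergence; either is adequate here.)
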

\begin{proof}
Given an initial point $e$ in $\RR^{n_e}$ and a direction $v$ also in $\RR^{n_e}$, geodesics are given as solution to the geodesic equation:
\begin{equation}\label{eq_GeodEq}
\frac{d^2 \gamma_\ell}{ds^2}(s)(s) = \sum_{i,j}^{n}\mathfrak\Gamma_{ij}^\ell \frac{d\gamma_i}{ds}(s)\frac{d\gamma_j}{ds}(s)\ ,\ \gamma(0)=e\ ,\ 
\frac{d\gamma}{ds}(0)=v\ ,
\end{equation}
where the $(\mathfrak\Gamma_{ij}^\ell)$ are
Christoffel
symbols associated to $P$ which are $C^1$ if $P$ is $C^2$.
The right hand side of the previous equation being $C^1$ we know that solutions $\left(\gamma(s),\frac{d\gamma}{ds}(s)\right)$ of (\ref{eq_GeodEq}) exist at least for small $s$, are unique and $C^1$. Hence, $\gamma(\cdot)$ is $C^2$ on its domain of existence.

Now, with \cite[Lemma A.1]{SanfelicePraly_TAC_12} and the assumption given in equation (\ref{Ass_boundP}) it yields that these geodesic can be maximally extended to $\RR$.
With Hopf-Rinow Theorem, this implies that the metric space $(\RR^{n_e},P)$ is complete.
Moreover, for any $e$ in $\RR^{n_e}$ there exists $\gamma^*:[0,s_e]\rightarrow \RR^{n_e}$ a $C^2$ curve (a geodesic) such that~:
$$
d_P(e,0) = \left. L(\gamma^*)\vrule height 0.51em depth 0.51em width 0em \right|_{0}^{s_e}\ .
$$

As a convention, it is assumed in the following and without loss of generality that the geodesics are normalized~:
$$
\frac{d\gamma^*}{ds}(s)^\top P(\gamma^*(s)) \frac{d\gamma^*}{ds}(s) = 1\ .
$$
Hence the function $V$ defined in (\ref{eq_LyapRiem}) satisfies~:
$$
V(e) = \int_0^{s_e} \sqrt{\frac{d\gamma^*}{ds}(s)^\top P(\gamma^*(s)) \frac{d\gamma^*}{ds}(s)} \, ds
= \int_0^{s_e} \frac{d\gamma^*}{ds}(s)^\top P(\gamma^*(s)) \frac{d\gamma^*}{ds}(s) \, ds = s_e\ .
$$

Let us first show that $V$ is a positive definite and proper function.
Since $\gamma^*:[0,s_e]$ is a continuous path from $e$ to zero, this implies that there exists $s_0$ in $[0,s_e]$ such that~:
\begin{equation}\label{eq_geotronc}
|\gamma^*(s_0)| = |e|\ ,\ |\gamma^*(s)|\leq |e|\ ,\ \forall s\in [s_0, s_e]\ .
\end{equation}
Note that~:
\begin{align*}
V(e) & = \int_0^{s_e} \sqrt{\frac{d\gamma^*}{ds}(s)^\top P(\gamma^*(s))\frac{d\gamma^*}{ds}(s)}ds\ ,\\
& \geq \int_{s_0}^{s_e} \sqrt{\frac{d\gamma^*}{ds}(s)^\top P(\gamma^*(s))\frac{d\gamma^*}{ds}(s)}ds\ ,\\
& \geq \sqrt{\underline p(|e|)}\int_{s_0}^{s_e} \sqrt{\frac{d\gamma^*}{ds}(s)^\top \frac{d\gamma^*}{ds}(s)}ds\ ,
\end{align*}
Since minimal geodesic for an Euclidean metric are straight lines $s\mapsto \frac{s\gamma^*(s_0)}{s_e-s_0}$, this implies~:
$$
\int_{s_0}^{s_e} \sqrt{\frac{d\gamma^*}{ds}(s)^\top \frac{d\gamma^*}{ds}(s)}ds
\geq \int_{s_0}^{s_e} \sqrt{\frac{\gamma^*(s_0)^\top \gamma^*(s_0)}{(s_e-s_0)^2}}ds=|\gamma^*(s_0)|\ .
$$
Hence, with (\ref{eq_geotronc}), it implies~:
\begin{align*}
V(e)& \geq \sqrt{\underline p(|e|)} |e|
\ .
\end{align*}
Moreover,
\begin{align*}
V(e) & \leq \int_0^{s_e} \frac{e^\top}{s_e} P\left (\frac{se^\top}{s_e}\right )\frac{e}{s_e}ds\ ,\\
& \leq \overline p(|e|) \int_0^{s_e} \frac{e^\top}{s_e} \frac{e}{s_e}ds\ ,\\
& \leq \frac{\overline p(|e|)}{s_e} |e|^2 \ .
\end{align*}
Since we have $V(e)=s_e$, the two previous inequalities imply the following~:
\begin{equation}\label{eq_PropRiemV}
\sqrt{\underline p(|e|)} |e| \leq V(e) \leq \sqrt{\overline p(|e|)} |e|\ .
\end{equation}
With (\ref{Ass_boundP}), this implies that the function $V$ is positive definite and proper.

Let now $\Gamma(s,t)$ be the mapping defined by~:
$$
\frac{\partial \Gamma}{\partial t}(s,t) = F(\Gamma(s,t))\ ,\ \Gamma(s,0) = \gamma^*(s)\ .
$$
The vector field $F$ being $C^2$, the mapping $\gamma^*$ being $C^2$, it yields that $\Gamma$ is $C^2$.
Note that $\Gamma(s,h)$ is a $C^2$ path such that~:
$$
\Gamma(s_e,h) = E(e,h)\ ,\ \Gamma(0,h) = 0\ .
$$
This implies the following inequality for all $h\geq 0$~:
$$
V(E(e,h)) \leq  \int_0^{s_e}\sqrt{ \frac{\partial \Gamma}{\partial s}(s,h)^\top P(\Gamma(s,h)) \frac{\partial \Gamma}{\partial s}(s,h)} ds\ .
$$
This yields~:
$$
D^+_F V(e) \leq \limsup_{h\rightarrow 0} \int_0^{s_e} \frac{\sqrt{\frac{\partial \Gamma}{\partial s}(s,h)^\top P(\Gamma(s,h)) \frac{\partial \Gamma}{\partial s}(s,h)}-\sqrt{\frac{\partial \gamma^*}{\partial s}(s)^\top P(\gamma^*(s))\frac{\partial \gamma^*}{\partial s}(s)} }{h} ds\ .
$$
Hence, with Fatou's lemma, it yields~:
$$
D^+V(e)\leq  \int_0^{s_e} \limsup_{h\rightarrow 0} \frac{\sqrt{\frac{\partial \Gamma}{\partial s}(s,h)^\top P(\Gamma(s,h)) \frac{\partial \Gamma}{\partial s}(s,h)}-\sqrt{\frac{\partial \gamma^*}{\partial s}(s)^\top P(\gamma^*(s))\frac{\partial \gamma^*}{\partial s}(s)} }{h} ds\ .
$$
The mapping $h\mapsto \sqrt{\frac{\partial \Gamma}{\partial s}(s,h)^\top P(\Gamma(s,h)) \frac{\partial \Gamma}{\partial s}(s,h)}$ being $C^1$ (since $\Gamma$ and $P$ are $C^2$), it yields
\begin{align*}
D^+V(e) 
&\leq  \int_0^{s_e} \frac{\partial}{\partial h}\left\{\sqrt{\frac{\partial \Gamma}{\partial s}(s,\cdot)^\top P(\Gamma(s,\cdot)) \frac{\partial \Gamma}{\partial s}(s,\cdot)}\right\}_{h=0} ds\ ,\\
&=-\int_0^{s_e} \frac{1}{2}\frac{\frac{d \gamma^*}{ds}(s)^\top Q \frac{d \gamma^*}{ds}(s)}{\sqrt{\frac{\partial \gamma^*}{\partial s}(s)^\top P(\gamma^*(s)) \frac{\partial \gamma^*}{\partial s}(s)}}ds\ ,\\
&\leq - \frac{1}{2}\mu_{\min}\{Q\}\int_0^{s_e} \frac{d \gamma^*}{ds}(s)^\top \frac{d \gamma^*}{ds}(s) ds\ ,
\end{align*}
where the last inequality employs the fact that the geodesics are normalized.
With Cauchy-Schwartz inequality, this implies~:
\begin{align*}
D^+V(e) 
&\leq - \frac{1}{2}\mu_{\min}\{Q\}\left(\int_0^{s_e} \sqrt{\frac{d \gamma^*}{ds}(s)^\top \frac{d \gamma^*}{ds}(s)} ds\right)^2\ .
\end{align*}
Since minimal geodesic for an Euclidean metric are straight lines, this implies~:
\begin{align*}
D^+V(e) 
&\leq - \frac{1}{2}\mu_{\min}\{Q\} \int_0^{s_e} \sqrt{ \frac{e}{s_e}^\top \frac{e}{s_e}} ds\ ,\\
&\leq - \frac{\mu_{\min}\{Q\}}{2\sqrt{\overline  p(|e|)}}  \int_0^{s_e}  \sqrt{\frac{e}{s_e}^\top P\left (\frac{s e}{s_e} \right )\frac{e}{s_e}} ds\ ,\\
&\leq - \frac{\mu_{\min}\{Q\}}{2\sqrt{\overline  p(|e|)}}  V(e)\ .
\end{align*}
%
This, together with (\ref{eq_PropRiemV}) implies global asymptotic stability of the origin.
Since $0<\underline p(0)<\overline{p}(0)$, it also implies that the origin is locally exponentially stable.
\end{proof}

Note that an interesting property of the considered Lyapunov function is that given two points $e_1$ and $e_2$ both in $\RR^{n_e}$, if we denote $d_P(e_1,e_2)$ the Riemmanian distance between these two points and $\gamma^*$ the minimal (and normalized) geodesic, it yields following the previous proof, this implies that there exists $s_0$ such that 
$$
|\gamma^*(s_0)-e_2| = |e_1-e_2|\ ,\ |\gamma^*(s)-e_2|\leq |e_1-e_2|\ ,\ \forall s\in [s_0, s_2]\ .
$$
From this, it yields
$$
d_P(e_1,e_2)  \geq \int_{s_0}^{s_2} \sqrt{\frac{d \gamma^*}{ds}(s)^\top P(\gamma^*(s))\frac{d \gamma^*}{ds}(s)}ds\ ,
$$
Moreover, for all $s$ in $[s_0, s_2]$, we have $|\gamma^*(s)| \leq |\gamma^*(s)-e_2| + |e_2|\leq |e_1-e_2| + |e_2|$.
Hence, it yields~:
\begin{align*}
d_P(e_1,e_2) 
& \geq\sqrt{ \underline p(|e_1-e_2|+|e_2|)}\int_{s_0}^{s_2} \sqrt{\frac{d \gamma^*}{ds}(s)^\top \frac{d \gamma^*}{ds}(s)ds}\ ,\\
& \geq\sqrt{ \underline p(|e_1-e_2|+|e_2|)}|\gamma^*(s_0)-e_2|
\ ,\\
& \geq\sqrt{ \underline p(|e_1-e_2|+|e_2|)}|e_1-e_2|\ .
\end{align*}
Moreover,
$$
d_P(e_1,e_2)  
\leq \frac{\overline p(|e_1-e_2|+|e_2|))}{d_P(e_1,e_2)} |e_1-e_2|^2\ .
$$
The two previous inequalities imply~:
$$
\sqrt{\underline p(|e_1-e_2|+|e_2|))} |e_1-e_2|\leq d_P(e_1,e_2) \leq \sqrt{\overline p(|e_1-e_2|+|e_2|))} |e_1-e_2|\ .
$$
Moreover, we have
$$
D^+_{F,F} d_P(e_1,e_2) \leq -\int_{s_1}^{s_2}\dot \gamma^*(s) Q \dot \gamma^*(s) ds \leq -\frac{\mu_{\min}\{Q\}}{2\sqrt{\underline p( |e_1-e_2|+|e_2|})} d_p(e_1,e_2)\ ,
$$
where
$$
D^+_{F,F} d_P(e_1,e_2) := \limsup_{h\searrow 0} \frac{d_P(E(e_1,h)),E(e_2,h))}{h}\ .
$$
In other words, there exists a strictly decreasing distance between any two points.
Consequently, it yields exponential convergence of the euclidean distance between any two trajectories toward zero.
Hence, roughly speaking, we have shown that when the origin is locally exponentially stable and globally attractive then there exists a strictly decreasing distance between any two trajectories.
However, this convergence is not uniform in $e_1$ and $e_2$. 
This is a strong difference with the property of incremental stability as studied for instance in \cite{Angeli_TAC_02_lyapIncStab} or \cite{ForniSepculchre_TAC_2014}.
Note moreover that it is shown in \cite{RufferEtAl_SCL_13ConvSystIncStab} that the asymptotic stability property and incremental stability property are different.

Note that as mentioned in \cite{AndrieuJayawardhanaPraly_TAC_TransExpStab}, when the two function $\underline p$ and $\overline p$ are respectively lower and upper bounded by a nonzero constant then the convergence obtained is uniform. In this case, the usual definition of incremental stability is recovered.

\subsubsection{About the requirement (\ref{Ass_boundP}) }

The requirement (\ref{Ass_boundP}) is essential to make sure that $\RR^{n_e}$ endowed with the Riemannian metric $P$ is complete. It is also essential to make sure that the obtained Lyapunov function is proper. It imposes that the mapping $\underline{p}$ doesn't vanish to quickly as $|e|$ goes to infinity. Going back to a definition of the mapping $\underline p$ obtained in the proof of Proposition \ref{Prop_ConsP}, it yields that if the vector field $F$ is globally Lipschitz then $\underline p$ is a constant. In other words, in the globally Lipschitz context this assumption is trivially satisfied.

Another solution to make sure that this assumption is satisfied is to modify the function $P$ to make sure that this one is lower bounded by a positive real number.
Indeed, note that the trajectories of the system
$$
\dot e = \frac{F(e)}{1 + \left|\frac{\partial F}{\partial e}(e)\right|^3} \ ,\ \dot \de = \frac{\frac{\partial F}{\partial e}(e)}{1 + \left|\frac{\partial F}{\partial e}(e)\right|^3}\de
$$
are the same than the one of the lifted system (\ref{eq_SystClassLinGlob}) (this system is obtained after a time rescaling).  
Consequently, the origin is globally attractive.
Moreover, it is not difficult to show that its origin is also locally exponentially stable.
Finally, if $F$ is $C^4$ then the vector field $e\mapsto\frac{F(e)}{1 + \left|\frac{\partial F}{\partial e}(e)\right|^3}$ is $C^3$.
Let $\tilde \Phi$ be the transition matrix defined as the solution of the following $\RR^{n_e\times n_e}$ dynamical system~:
$$
\frac{d}{dt}\tilde \Phi\,(e,t) = \frac{\frac{\partial F}{\partial e}(E(e,t))}{1 + \left|\frac{\partial F}{\partial e}(E(e,t))\right|^3}\tilde \Phi(e,t)\ ,\ \tilde \Phi(e,0)=I\ .
$$
Again, each element of the (matrix) time function $t\mapsto \tilde \Phi(e,t)$ is in $L^2([0,+\infty))$.
Consequently, for all positive definite matrix $Q$ in $\RR^{n_e\times n_e}$, the matrix function~:
\begin{equation}\label{eq_PClassGlobbounded}
\tilde P(e) = \lim_{T\rightarrow +\infty } \int_0^T \tilde\Phi(e,s)^\top Q \tilde \Phi(e,s) ds\ ,
\end{equation}
is well defined.
With this mapping, the following property may be obtained.
\begin{proposition}[Lower bounded $P$]\label{Prop_ConsPbounded}
Assume that 
there exist function $(k,\dk)$ and  positive real numbers $(\lambda, \dlambda)$ such that (\ref{eq_LESGA}) and (\ref{eq_GlobLinStab}) are satisfied.
Then, the matrix function $P:\RR^{n_e}\rightarrow\RR^{n_e\times n_e}$ defined in (\ref{eq_PClassGlobbounded}) is well defined, continuous,  and there exist a a positive real number $\underbar p$ and a non decreasing function $\bar p$ such that
\begin{equation}\label{eq_BoundPGlobbounded}
0<\underbar p  I \leq \tilde P(e) \leq \bar p(|e|)I\ ,\ \forall\ e\in\RR^{n_e}\ .
\end{equation}
Moreover,
\begin{equation}\label{eq_dPdtGlobBd}
\der_{F} \tilde P(e) +
\tilde P(e)\frac{\partial F}{\partial e}(e)
+ \frac{\partial F}{\partial e}(e)^\top \tilde P(e)
\leq -Q\left(1+\left|\frac{\partial F}{\partial e}(e)\right|^3\right)\ ,\ \forall\ e\in\RR^{n_e}\ .
\end{equation}
Finally, if the vector field $F$ is $C^4$ then $P$ is $C^2$.
\end{proposition}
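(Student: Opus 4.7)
The plan is to follow the architecture of the proof of Proposition \ref{Prop_ConsP} closely, adapting it in three places: (i) for the upper bound of $\tilde{P}$; (ii) for the lower bound, which now must be shown to be \emph{constant}; and (iii) for the Lyapunov inequality, where a scalar factor $(1+|\partial F/\partial e(e)|^3)$ appears on the right-hand side as a consequence of the time rescaling. Throughout, write $A(e):=\partial F/\partial e(e)$ and $G(e):=A(e)/(1+|A(e)|^3)$.

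First I would establish the exponential decay of $\tilde{\Phi}(e,t)$ analogous to (\ref{eq_ExpStabDriftLocalTrans}) and (\ref{eq_GlobLinStab}). Since the trajectories of the rescaled system coincide (up to time reparametrization) with those of (\ref{eq_SystClass}) and (\ref{eq_SystClassLinGlob}), global attractivity and local exponential stability transfer to the rescaled system; the linearization at the origin, $G(0)=A(0)/(1+|A(0)|^3)$, is Hurwitz exactly when $A(0)$ is, and the Fact 1 argument (Gronwall plus the quadratic Lyapunov function for $G(0)$) provides a bound $|\tilde{\Phi}(e,t)|\le\tilde{k}_1(|e|)\exp(-\tilde{\lambda}_1 t)$. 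This bound plus the definition (\ref{eq_PClassGlobbounded}) gives well-definedness, continuity, and the upper bound $\tilde{P}(e)\le \bar{p}(|e|)I$ with $\bar{p}(|e|)=\tilde{k}_1(|e|)^2\mu_{\max}\{Q\}/(2\tilde{\lambda}_1)$, exactly as in the proof of Proposition \ref{Prop_ConsP}.

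The key new point, and the main obstacle, is the \emph{constant} lower bound. The scalar function $x\mapsto x/(1+x^3)$ is bounded on $\RR_{\geq 0}$ by the absolute constant $C:=2^{2/3}/3$, hence $|G(e)|\le C$ uniformly in $e\in\RR^{n_e}$. Repeating the argument of Proposition \ref{Prop_ConsP} where the growth bound $\exp(c(k(|e|)|e|)t)$ on $\tilde{\Phi}(e,t)^{-1}$ was established, but with $C$ replacing the state-dependent constant, yields $|v^\prime\tilde{\Phi}(e,t)^{-1}|\le\exp(Ct)|v|$ \emph{uniformly} in $e$. The same chain of inequalities then gives
$$v^\prime\tilde{P}(e)v\ge \mu_{\min}\{Q\}\int_0^\infty\exp(-2Ct)\,dt\,|v|^2=\frac{\mu_{\min}\{Q\}}{2C}\,|v|^2,$$
so the lower bound $\underbar{p}:=\mu_{\min}\{Q\}/(2C)$ is independent of $e$, establishing (\ref{eq_BoundPGlobbounded}).

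For the Lyapunov inequality (\ref{eq_dPdtGlobBd}), I would reproduce the semigroup calculation of Proposition \ref{Prop_ConsP} verbatim, working with $\tilde{\Phi}$ and the rescaled vector field $\tilde{F}(e):=F(e)/(1+|A(e)|^3)$. This yields
$$\der_{\tilde{F}}\tilde{P}(e)+\tilde{P}(e)G(e)+G(e)^\top\tilde{P}(e)=-Q.$$
Multiplying through by the positive scalar $(1+|A(e)|^3)$ and using $\der_{F}=(1+|A|^3)\der_{\tilde{F}}$ together with $(1+|A|^3)G(e)=A(e)$ produces exactly (\ref{eq_dPdtGlobBd}). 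For the smoothness claim, if $F$ is $C^4$ then $A$ is $C^3$ and so is $\tilde{F}$, so the rescaled transition matrix is $C^2$ in $e$ with first and second $e$-derivatives satisfying $L^2$-type bounds inherited from the exponential decay (as argued in Proposition \ref{Prop_ConsP}), hence $\tilde{P}$ is $C^2$. The principal difficulty is really the bookkeeping of step (iii): verifying that the rescaling preserves the semigroup identity used to differentiate $\tilde{P}$ along $E(e,t)$, which comes down to confirming that $\tilde{\Phi}$ satisfies $\tilde{\Phi}(E(e,t),r)\tilde{\Phi}(e,t)=\tilde{\Phi}(e,t+r)$ when it is interpreted consistently as the transition matrix of the rescaled variational system traced along rescaled trajectories.
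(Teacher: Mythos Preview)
Your proposal is correct and mirrors the paper's own proof almost exactly: the paper also invokes the argument of Proposition~\ref{Prop_ConsP}, bounds $|G(e)|$ uniformly (using the cruder bound $1$ in place of your sharper $2^{2/3}/3$) to obtain a constant $\underline p=\mu_{\min}\{Q\}/2$, and then derives (\ref{eq_dPdtGlobBd}) by first establishing $\der_{\tilde F}\tilde P + \tilde P G + G^\top\tilde P\le -Q$ via the semigroup computation and multiplying through by $1+|\partial F/\partial e|^3$. Your explicit remark about interpreting $\tilde\Phi$ along the rescaled trajectories is the right way to make that step precise.
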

\begin{proof}
The proof follows the same step then the one of Proposition \ref{Prop_ConsP}.
For all $(e,t)$ in
$\RR^{n_e}\times\RR_{\geq 0}$ there exists~:
$$
\left|\tilde \Phi(e,t)\right|\leq \dk(|e|)\exp(-\tilde \lambda t)\ .
$$
This allows us to claim that, for every symmetric positive definite
matrix $Q$, the function $\PR :\RR^{n_x}\rightarrow\RR^{n_e\times n_e}$ given by  (\ref{eq_PClassGlob}) 
is well defined, continuous and satisfies~:
$$
\mu_{\max}\{\PR (e)\}\leq \frac{\dk(|e|) ^2}{2\tilde \lambda }\mu_{\max}\{Q\}=\overline{p}(|e|)
\ ,\  \forall e\in \RR^{n_e}
\  .
$$
Morover, for all $(t,v)$ in $(\RR \times\RR^{n_e})$, we have~:
$$
\frac{\partial }{\partial t}
\left(
v^\prime \left[\Phi(e,t)\right]^{-1}
\right)
=
-v^\prime  \left[\Phi(e,t)\right]^{-1}
\frac{\frac{\partial F}{\partial e}(E(e,t))}{1+\left| \frac{\partial F}{\partial e}(E(e,t))\right|^3}
\  .
$$
However since we have by  (\ref{eq_LESGA})
$$
\left |\frac{\frac{\partial F}{\partial e}(E(e,t))}{1+\left| \frac{\partial F}{\partial e}(E(e,t))\right|^3}\right |\leq 1\ ,
$$
it yields the following estimate~:
$$
\left|v^\prime \Phi(e,t)^{-1}\right|
\leq \exp( t)
\left|v\right|\ , \ \forall (t,v) \in (\RR \times\RR^{n_e})\ ,
$$
From this following the proof of Proposition \ref{Prop_ConsP}, it yields~:
$$
\underline{p} = \frac{\mu_{\min}\{Q\}}{2 }
\leq \lambda_{\min}\{\PR (e)\}
\qquad \forall \de\in \RR^{n_e}
\  .
$$
Also, the following inequality may be obtained~:
$$
\der_{\frac{F}{1+|\frac{\partial F}{\partial e}(e)|^3}} \tilde P(e) +
\tilde P(e)\frac{\frac{\partial F}{\partial e}(e)}{1+|\frac{\partial F}{\partial e}(e)|^3}
+ \frac{\frac{\partial F}{\partial e}(e)^\top}{1+|\frac{\partial F}{\partial e}(e)|^3} \tilde P(e)
\leq -Q\ ,\ \forall\ e\in\RR^{n_e}\ .
$$
Multiplying the former equation by $1+|\frac{\partial F}{\partial e}(e)|^3$ and it yields the result.
\end{proof}

Since the matrix function $P$ is lower bounded, we can define a Lyapunov function following the proposition \ref{Prop_LyapSuffGlobStab}.
Roughtly speaking we have established the following Lyapunov inverse result~:
\textit{Assuming some regularity on the system, if the origin is locally exponentially stable and globally attractive then there exists a strictly decreasing Lyapunov function given as a Riemannian distance to the origin.}

Of course the local exponential stability property is essential.
Note that in \cite{GruneSontagWirth_SCL_99asymptotic}, is shown that up to a change of coordinates (which is not a diffeomorphism since it is not smooth at the origin) it is possible to transform any asymptotically stable system in an exponentially stable system. This implies that up to a change of variable, it is always possible to consider Lyapunov function coming from a Riemannian distance.

\subsection{Stabilization}

From the previous analysis, it has been shown that a linearization approach leads to the constuction of global Lyapunov function in the case of local exponential stability and global attractivity. It may be interesting to know if this type of Lyapunov function may be used in control design.

We consider here a  controlled nonlinear system given on $\RR^n$ as
\begin{equation}\label{eq_SystTrack}
\dot w=f(w)+g(w)u\ ,
\end{equation}
with $f:\RR^n\rightarrow\RR^n$ and $g:\RR^n\rightarrow\RR^n$ are smooth vector fields and $u$ the controlled input is in $\RR$.

Our objective is to construct a control $u=\phi(w)$ that achieves local exponential stabilization and global attractivity of the origin.
Based on the former analysis, a sufficient condition based on the use of a Riemanian Lyapunov function may be given.
Note however that these assumptions inspired from \cite{ForniSepulchreVFSchaft_13_CDC_differentialPass} and \cite{AndrieuJayawardhanaPraly_CDC_13} are very conservative.
\begin{proposition}
Assume there exists a mapping $P:\RR^n\rightarrow\RR^{n\times n}$ such that
\begin{enumerate}
\item The matrix function $P$ is $C^3$, satisfies the condition 
 (\ref{eq_BoundPGlob}),  (\ref{Ass_boundP}) and there exists a positive real number $\lambda$ and a positive definite matrix $Q$ such that the following matrix inequality holds~:
 \begin{equation}\label{eq_dPdtGlobStabilization}
\der_{f} P(w) +
P(w)\frac{\partial f}{\partial w}(w)
+ \frac{\partial f}{\partial w}(w)^\top P(w) - \lambda\left | P(w)g(w) \right |^2
\leq -Q\ ,\ \forall\ w\in\RR^{n}\ .
\end{equation}
\item $g$ is a Killing vector field for the metric $P$. In other word, for all $w$ in $\RR^n$~:
$$
L_gP(w) = \der_{g} P(w) +
P(w)\frac{\partial g}{\partial w}(w)
+ \frac{\partial g}{\partial w}(w)^\top P(w) = 0\ .
$$
\item  There exists a mapping $U:\RR^n\rightarrow\RR$ such that~:
\begin{equation}\label{eq_IntegCond}
\frac{\partial U }{\partial  w}(w) = P(w)g(w)^\top\ .
\end{equation}
\end{enumerate}
Then the control law $u = -\lambda U(w)$ achieves  local exponential stability and globally attractivity of the origin of system (\ref{eq_SystTrack}) in closed loop. 
\end{proposition}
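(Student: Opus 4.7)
The natural plan is to reduce the problem to Proposition~\ref{Prop_LyapSuffGlobStab} applied to the closed-loop vector field $F(w) := f(w) - \lambda g(w) U(w)$ with the very same matrix function $P$ furnished by the hypothesis. Assumption~1 already gives that $P$ is $C^3$, satisfies (\ref{eq_BoundPGlob}) and (\ref{Ass_boundP}), so the only thing missing in order to invoke Proposition~\ref{Prop_LyapSuffGlobStab} is to establish the Lyapunov matrix inequality (\ref{eq_dPdtGlob}) along $F$, namely
\begin{equation*}
\der_F P(w) + P(w)\frac{\partial F}{\partial w}(w) + \frac{\partial F}{\partial w}(w)^\top P(w) \leq -Q\ , \quad \forall w\in\RR^n\ .
\end{equation*}
Once this is shown, Proposition~\ref{Prop_LyapSuffGlobStab} delivers the Riemannian Lyapunov function $V(w) = d_P(w,0)$ and gives directly both local exponential stability and global attractivity of the origin.

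The core of the proof is therefore a direct calculation of $L_F P$. Using $F = f - \lambda U g$, one expands
\begin{equation*}
\der_F P = \der_f P - \lambda U\,\der_g P\ ,\qquad
\frac{\partial F}{\partial w} = \frac{\partial f}{\partial w} - \lambda U\,\frac{\partial g}{\partial w} - \lambda g\,\frac{\partial U}{\partial w}\ ,
\end{equation*}
so that, grouping terms, one obtains
\begin{equation*}
L_F P(w) = L_f P(w) \;-\; \lambda U(w)\, L_g P(w) \;-\; \lambda\Bigl[P(w)g(w)\tfrac{\partial U}{\partial w}(w) + \tfrac{\partial U}{\partial w}(w)^\top g(w)^\top P(w)\Bigr]\ .
\end{equation*}
At this point the three structural assumptions play distinct and essential roles. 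Assumption~2 (Killing) kills the middle term: $L_g P \equiv 0$, which is crucial because the remaining $U(w)$-dependence would otherwise be sign-indefinite and impossible to dominate. Assumption~3 (integrability $\partial U/\partial w = g^\top P$) converts the bracket in the last term into $2\,P(w)g(w)g(w)^\top P(w)$, and finally Assumption~1 dominates $L_f P(w)$ by $-Q$ plus a term of order $\lambda |P(w)g(w)|^2$ that is precisely what is cancelled by this rank-one correction. Collecting everything yields $L_F P(w) \leq -Q$.

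The potentially delicate points are, first, making sure the scalar/matrix reading of the term $\lambda|P(w)g(w)|^2$ in (\ref{eq_dPdtGlobStabilization}) is compatible with the matrix quantity $2\lambda\,P(w)g(w)g(w)^\top P(w)$ that appears in the computation (this is handled by interpreting the bound in the quadratic-form sense, since $P g g^\top P \le |Pg|^2 I$ along its range), and second, ensuring that one may actually invoke Proposition~\ref{Prop_LyapSuffGlobStab}: the vector field $F = f - \lambda g U$ is $C^2$ as soon as $f$, $g$ are smooth and $U$ is $C^2$, which follows from $P$ being $C^3$ together with (\ref{eq_IntegCond}). The bounds (\ref{eq_BoundPGlob}) and the properness condition (\ref{Ass_boundP}) carry over trivially since $P$ itself is unchanged. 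The hardest conceptual step is really identifying why the combination of the Killing condition and the integrability condition is exactly what is needed to close the loop; once this is noticed, the rest is a bookkeeping exercise feeding the result of Proposition~\ref{Prop_LyapSuffGlobStab}.
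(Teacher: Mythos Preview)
Your approach is exactly the one the paper takes: write the closed-loop vector field as $F=f-\lambda Ug$, expand $L_FP$, kill the $L_gP$ term via the Killing assumption, collapse the remaining bracket via the integrability assumption, and then invoke Proposition~\ref{Prop_LyapSuffGlobStab}. Your bookkeeping is in fact more careful than the paper's, which writes only the single term $-P(w)g(w)\tfrac{\partial U}{\partial w}(w)$ and immediately identifies it with $-\lambda|P(w)g(w)|^2$, whereas you correctly record the symmetric pair yielding $-2\lambda\,P(w)g(w)g(w)^\top P(w)$.

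One caution on the point you flag: the inequality $Pgg^\top P\le |Pg|^2 I$ is true but goes the wrong way for what is needed, so it does not by itself reconcile the scalar $\lambda|Pg|^2$ in (\ref{eq_dPdtGlobStabilization}) with the rank-one matrix that actually appears. The paper's proof has precisely the same discrepancy and simply does not address it; the statement should really be read with $\lambda|Pg|^2$ standing for the rank-one matrix $\lambda Pgg^\top P$ (up to the factor~2), in which case both your argument and the paper's go through verbatim.
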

\begin{proof}
The proof follows readily from Proposition \ref{Prop_LyapSuffGlobStab}. Indeed, the closed loop system may be rewritten as
$$
F(w) = f(w)-\lambda g(w) U(w)\ .
$$
Note that if we compute the Lie derivative of the tensor $P$, it yields
$$
L_FP(w) = L_f P(w) - \lambda L_gP(w) U(w) - P(w)g(w)\frac{\partial U}{\partial w}(w)
$$
From the assumptions, this yields the following property~:
$$
L_FP(w) = L_f P(w)   -  \lambda \left |P(w)g(w)\right |^2 \leq -Q
$$
From Proposition \ref{Prop_LyapSuffGlobStab}, it implies the result.
\end{proof}
Following \cite{AndrieuJayawardhanaPraly_TAC_TransExpStab}, it is possible to slightly relax these assumptions by introducing a scaling factor $\alpha(w)$ which multiply $g$ and by rewriting these assumption accordingly.

\section{Conclusion and final remark}

In this note, it has been shown how first order approximation study may lead to the construction of Lyapunov function that characterizes the local exponential stability of a transverse manifold. 
In the context of stabilization of an equilibrium point, a global Lyapunov function may be constructed from first order approximation.
In this case, one has to consider the Riemaniann length to the origin as a Lyapunov function.

An interesting question is to consider the problem of global stability property for a transverse manifold.
However, as it can be shown in this simple example, some problematic effect can show up.
Consider the following planar system defined on $\RR^2$:
\begin{equation}\label{eq_Example}
\dot e = -\phi (x)e \ , \ \dot x = \mu_x x \ , \ \phi(x) = \lambda + x \sin(x)\ .
\end{equation}
It can  be checked, that its solutions are defined for all $t$ in $\RR$ as~:
$$
E(e_0,x_0,t) = \exp\left(-\lambda t+\frac{\cos(1)-\cos(e^{\mu_x t}x_0)}{\mu_x}\right) e_0 \ ,\ X(e_0,x_0,t) = e^{\mu_x t} x_0\ .
$$
This implies that the manifold $\{(e,x), e=0\}$ is locally exponential stable and globally attractive uniformly in $x$.
Indeed, we have for all $(e_0,x_0)$ in $\RR$~:
$$
|E((e_0,x_0),t)|\leq \exp\left(\frac{\cos(1)+1}{\mu_x}\right) \exp(-\lambda t)|e_0|\ .
$$
However, note that if we consider the transversally linear system, we have~:
$$
\dot {\overparen{\begin{bmatrix}
\dE\\ \dX
\end{bmatrix}}} = \begin{bmatrix}
\phi(e^{\mu_x t}x_0) & \phi'(e^{\mu_x t}x_0)E(e_0,x_0,t)\\0 & \mu_x
\end{bmatrix}\begin{bmatrix}
\dE\\ \dX
\end{bmatrix} \ ,
$$
which gives (with $\dE(t)=\dE(\de_0,\dx_0,e_0,x_0,t)$)~:
\begin{align*}
\dE(t) &=\exp\left(\int_0^t \phi(e^{\mu_x s}x_0)ds\right)\de_0 +  \int_0^t \exp\left(\int_s^t \phi(e^{\mu_x \nu}x_0)d\nu\right) \phi'(e^{\mu_x \nu}x_0)\\&
\qquad\qquad\qquad\qquad\qquad\qquad\qquad\qquad\qquad
\times E(w_0,s)e^{\mu_x s}\dx_0ds\ ,\\
&=  \exp\left(\int_0^t \phi(e^{\mu_x s}x_0)ds\right)\left[\de_0 + \int_0^t \phi'(e^{\mu_x s}x_0)e^{\mu_x s}e_0\dx_0 ds\right]\ .
\end{align*}
Hence, this yields $x_0\neq 0$,
\begin{align*}
\dE(t) 
&=  \exp\left(\int_0^t \phi(e^{\mu_x s}x_0)ds\right)\left[\de_0 + \frac{\phi(e^{\mu_x t}x_0)- \phi(x_0)}{\mu_x}\frac{e_0\dx_0}{x_0}\right]\ .
\end{align*}
With $\phi$ previously defined
it gives~:
\begin{multline*}
\dE( t) =  \exp\left(\frac{\cos(x_0)-\cos(e^{\mu_x t}x_0)}{\mu_x}\right) \\\times\left[e^{-\lambda t}\de_0 + \frac{e^{(\mu_x-\lambda) t}\sin(e^{\mu_x t}x_0)- \sin(x_0)e^{-\lambda t}}{\mu_x}e_0\dx_0\right]\ .
\end{multline*}
It can be checked that $\dE( t)$ doesn't converge to zero if $\lambda < \mu_x$ if for instance
 $e_0=1$, $x_0=1$, $\dx_0=1$.
From this remarks, this implies that the study of the linearized system have to be taken with care.
Indeed, this implies that the \factun given in the introduction  is no longer valid in this context.
More precisely, exponential convergence to the origin of the $e$ dynamics, doesn't imply that the $\de$ component of the linearized system along the solutions converges to zero.

In \cite{AndrieuJayawardhanaPraly_GlobTransExpStab}, it has been shown that when the convergence rate to the manifold is larger then the expansion rate in the manifold, \factun may hold.
In this case, it is possible to construct a Lyapunov function based on first order approximation.

Finally, the construction of a matrix function $P$ which satisfies equations (\ref{eq_TensorDerivativeTrans}), (\ref{eq_dPdtGlob}) or  (\ref{eq_dPdtGlobStabilization}) is a crucial step in order to make this framework interesting from a practical point of view.
Preliminary results aiming at solving a differential Riccati equation (as the one given in (\ref{eq_dPdtGlobStabilization})) are given in \cite{SanfelicePraly_CDC_15}.
Backstepping based approaches is also a possible research line (see \cite{ZamaniTabuada_TAC_11BacksteppingIncrStab} or \cite{ZamanietAl_SCL_2013backstepping}).
Finally, a method following a numerical approximation of the partial differential equation should also be considered.

\bibliographystyle{plain}
\bibliography{BibVA}

\end{document}